\newtheorem{theorem}{Theorem}
\newtheorem{lemma}{Lemma}
\newtheorem{corollary}{Corollary}
\newtheorem{proposition}{Proposition}
\theoremstyle{definition}
\newtheorem{definition}{Definition}
\newtheorem{remark}{Remark}
\newtheorem{example}{Example}
\newcommand{\Z}{\mathbb Z}
\newcommand{\N}{\mathbb N}
\newcommand{\bega}{\left(\begin{array}}
\newcommand{\ena}{\end{array}\right)}
\newcommand{\skcrv}{\raisebox{-0.25\height}{\includegraphics[width=0.5cm]{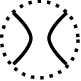}}}
\newcommand{\skcrh}{\raisebox{-0.25\height}{\includegraphics[width=0.5cm]{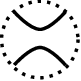}}}
\newcommand{\skcrr}{\raisebox{-0.25\height}{\includegraphics[width=0.5cm]{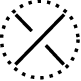}}}
\newcommand{\skcrro}{\raisebox{-0.25\height}{\includegraphics[width=0.5cm]{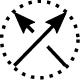}}}
\newcommand{\skcrlo}{\raisebox{-0.25\height}{\includegraphics[width=0.5cm]{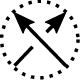}}}
\newcommand{\skcrvo}{\raisebox{-0.25\height}{\includegraphics[width=0.5cm]{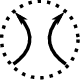}}}
\newcommand{\skcircle}{\raisebox{-0.25\height}{\includegraphics[width=0.5cm]{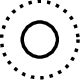}}}
\newcommand{\skempty}{\raisebox{-0.25\height}{\includegraphics[width=0.5cm]{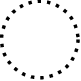}}}
\newcommand{\skcircarc}{\raisebox{-0.25\height}{\includegraphics[width=0.5cm]{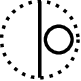}}}
\newcommand{\skarc}{\raisebox{-0.25\height}{\includegraphics[width=0.5cm]{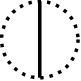}}}
\title{On skein invariants}
\author{Igor Nikonov\footnote{Lomonosov Moscow State University, Moscow, 119991 Russia\\ Email: nikonov at mech.math.msu.su }}
\date{}
\begin{document}

\maketitle

\begin{abstract}
    A knot invariant is called skein if it is determined by a finite number of skein relations. In the paper we discuss some basic properties of skein invariants and mention some known examples of skein invariants.
\end{abstract}

\textbf{MSC 2020}: 57K10, 57K12, 57K31

\textbf{Keywords}: knot, skein relation

\section{Introduction}

In the late 1960s John Conway~\cite{Conway} noted that the renown Alexander polynomial $\nabla$ can be defined using a simple relation on values of the polynomial on diagrams which differ locally from each other:
\begin{gather*}
\nabla(\skcrro)-\nabla(\skcrlo)=z\nabla(\skcrvo).
\end{gather*}
Conway used the term \emph{skein} for this type of relations.

In fifteen years, other polynomial invariants (like Jones~\cite{Jones}, HOMFLY-PT~\cite{HOMFLY,PT}) appeared which can also be defined using skein relations. At about the same time, L.H. Kauffman~\cite{KauffmanBook} gave a description of Arf invariant using pass move. It became clear that a new type of invariants had appeared in knot theory. The value of such an invariant of a knot is the class of the knot in some quotient of the set of knots generated by skein relations. The next step was made by J.H. Przytycki~\cite{Prz} and V.G. Turaev~\cite{Tur} who defined the notion of skein module. A review of known results on skein modules can be found in~\cite{BPW}.

The aim of the paper is to recall some known examples of skein invariants and discuss their basic properties. 

The paper is organized as follows. We start with definitions of knot diagrams and moves and skein invariants. In Section~\ref{sect:skein_invariant} we give the formal definition of skein invariants and consider some structures on them. Section~\ref{sect:skein_invariant} contains some examples of skein invariants. We conclude the paper with a list of open problems concerning skein invariants.


\section{Knot diagrams}\label{sect:knot_diagrams}

\begin{definition}\label{def:tangle}
Let $F$ be an oriented compact connected surface. A \emph{tangle diagram} $D$ is an embedded finite graph with vertices of valences $1$ and $4$ such that the set $\partial D$ of vertices of valence $1$ coincides with $D\cap\partial F$ and the vertices of valence $4$ (called \emph{crossings}) carry additional structure: structure of a classical or a virtual crossing (Fig.~\ref{fig:crossing_types}). A diagram without virtual crossings is called \emph{classical}. 

Diagrams are considered up to isotopy fixed on the boundary.

\begin{figure}[h]
\centering\includegraphics[width=0.2\textwidth]{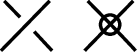}
\caption{A classical and a virtual crossings}\label{fig:crossing_types}
\end{figure}

Edges incident to a $4$-valent vertex of a tangle diagram split naturally in two pairs of \emph{opposite} edges. Correspondence between opposite edges induces an equivalence relation on the set of edges of the diagram. Equivalence classes of this relation are called \emph{(unicursal) components} of the diagram. A component is \emph{long} if it contains vertices of valency $1$ (in this case the edges of the component form a path in the diagram), otherwise the component is called \emph{closed} (in this case the edges of the component form a cycle), see Fig.~\ref{fig:tangle}.

We say that a diagram is \emph{oriented} if all its components are oriented.

\begin{figure}[h]
\centering\includegraphics[width=0.3\textwidth]{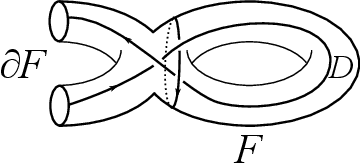}
\caption{A diagram with one closed and one long component}\label{fig:tangle}
\end{figure}

A diagram without long components is a \emph{link} diagram, a link diagram with one component is a \emph{knot} diagram. A diagram with one component which is long, is called a \emph{long knot} diagram.
\end{definition}

\begin{definition}\label{def:n-tangle}
An \emph{$n$-tangle} is a diagram $D$ in the standard disk $\mathbb{D}^2$ such that $\partial D=X$ where $X\subset \mathbb{D}^2$ is a fixed counterclockwise enumerated set with $2n$ elements. The set of $n$-tangles is denoted $\mathcal T_n$, and $\mathcal T_n^+$ is the set of oriented $n$-tangles. An $n$-tangle may have crossings of any type (classical or virtual, Fig.~\ref{fig:crossing_types}).

\begin{figure}[h]
\centering\includegraphics[width=0.12\textwidth]{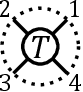}
\caption{A 2-tangle}\label{fig:2tangle}
\end{figure}
\end{definition}

\begin{definition}\label{def:local_move}
A \emph{local move} is a pair $M=(T_1,T_2)$ of $n$-tangles such that $\partial T_1=\partial T_2$.
\end{definition}

Given a move $M=(T_1,T_2)$, a diagram $D$ and a disk $B\subset F$ such that $T=D\cap B$ is homeomorphic to $T_1$, one gets a new diagram $R_M(D,T)$ by replacing the subtangle $T$ with the subtangle homeomorphic to $T_2$.

\begin{figure}[h]
\centering\includegraphics[width=0.35\textwidth]{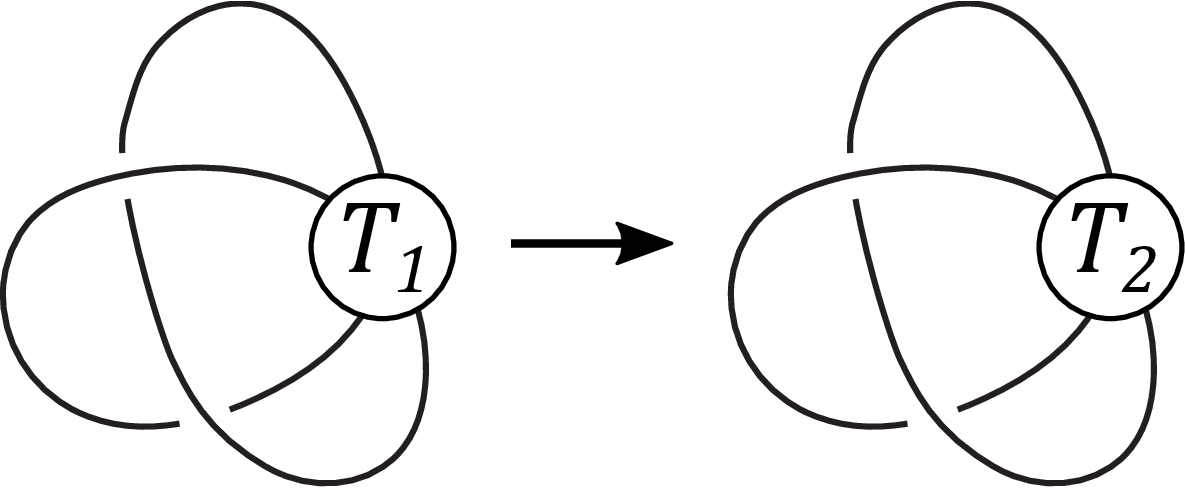}
\caption{Application of a local move $(T_1,T_2)$ to a diagram}\label{fig:local_transformation}
\end{figure}

\begin{remark}
We will also consider local moves $(T_1,T_2)$ with $T_1,T_2\in A[\mathcal T_n]$ where $A$ is a coefficient ring. In this case the result of the local move applied to a diagram is a linear combination of diagrams with coefficients in $A$. The moves $(T_1,T_2)$ with $T_1,T_2\in \mathcal T_n$ are called \emph{monomial}.
\end{remark}

We can use the notion of moves to give a combinatorial definition of knots.

\begin{definition}\label{def:knot_theory}
A {\em (classical) knot} (link, tangle) in a surface $F$ is an equivalence class of classical knot (link, tangle) diagrams in $F$ modulo the classical Reidemeister moves $\Omega_1, \Omega_2, \Omega_3$, see Fig.~\ref{fig:reidemeister_moves}.

A {\em virtual knot} (link, tangle) is an equivalence class of virtual diagrams in $\mathbb R^2$ modulo the classical and virtual Reidemeister moves $\Omega_1$, $\Omega_2$, $\Omega_3$, $V\Omega_1$, $V\Omega_2$, $V\Omega_3$, $SV\Omega_3$.

A {\em welded knot} (link, tangle) is an equivalence class of virtual diagrams in $\mathbb R^2$ modulo the classical and virtual Reidemeister moves and the forbidden move $F^o$.

\begin{figure}[h]
\centering\includegraphics[width=0.3\textwidth]{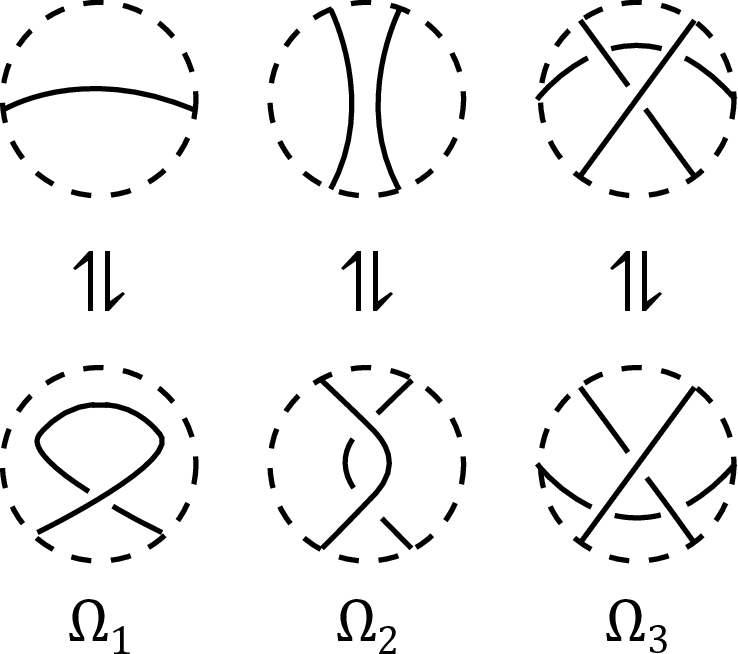}\qquad \includegraphics[width=0.415\textwidth]{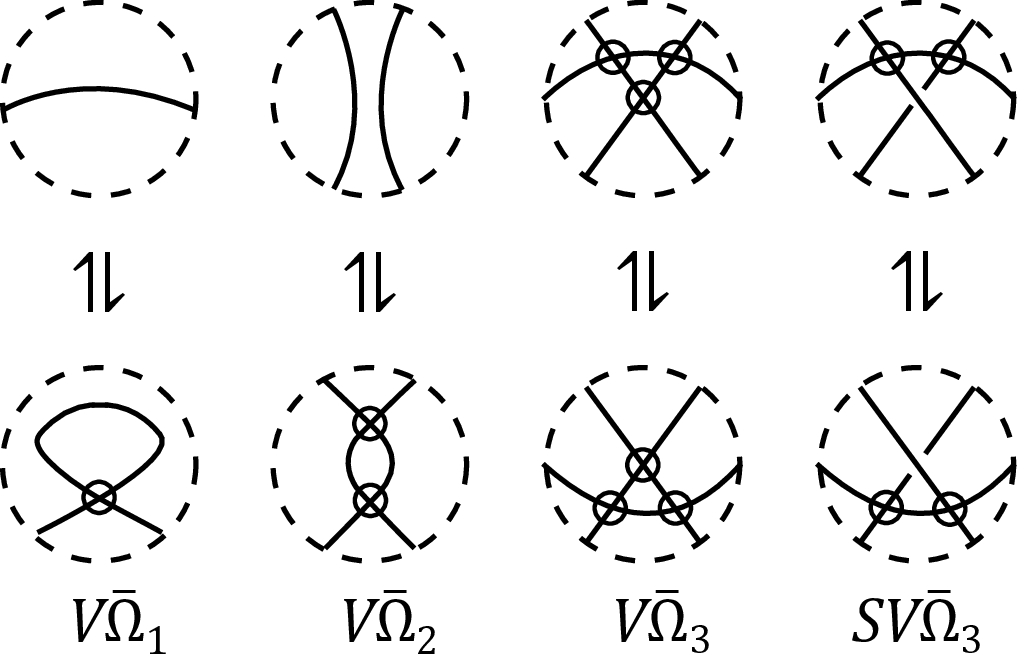}
\qquad \includegraphics[width=0.085\textwidth]{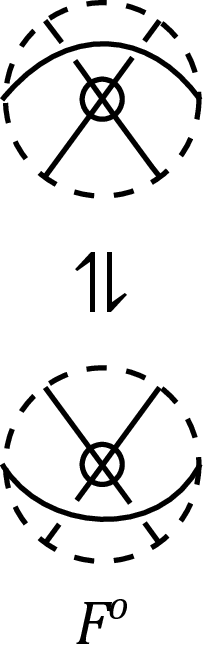}
\caption{Classical and virtual Reidemeister moves}\label{fig:reidemeister_moves}
\end{figure}
\end{definition}

\section{Skein invariants}\label{sect:skein_invariant}

\begin{definition}
Let $\mathscr K$ be the set of classical (virtual, welded) knots (links, tangles), $A$ a ring and $\mathcal M=\{(T_i,T'_i)\}_{i\in\mathcal I}$, where $T_i, T'_i\in A[\mathcal T_{n_i}]$, $n_i\in\mathbb N$, $i\in\mathcal I$, a set of moves.  Given two $A$-linear combinations of diagrams $D$ and $D'$, we say that $D$ and $D'$ are \emph{equivalent modulo the set of moves $\mathcal M$} if there exists a sequence of $A$-linear combinations of diagrams $D=D_1, D'_1,\dots, D_l,D'_l=D'$ such that $D_k$ and $D'_k$, $k=1,\dots,l$, differ by Reidemeister moves and diagram isotopies (i.e. define the same $A$-linear combination of knots in $A[\mathscr K]$), and $D_{k+1}$ is the result of application of some local move $M\in\mathcal M$ to $D'_k$, $k=1,\dots, l-1$. For equivalent $D$ and $D'$ we use the notation $D\sim_{\mathcal M} D'$.

The set $A[\mathscr K]/\mathcal M$ of equivalence classes of $A$-linear combinations of diagrams modulo $\mathcal M$ is called the \emph{skein module} induced by the set of moves $\mathcal M$. 
\end{definition}

\begin{definition}\label{def:skein_invariant}
Let $\mathscr K$ be the set of knots (links, tangles) and $A$ a ring. A knot invariant $I\colon \mathscr K\to X$ with values in a set $X$ is called a \emph{skein invariant} if there exists a finite set of moves $\mathcal M$ and an injection $f\colon A[\mathscr K]/\mathcal M\to X$ such that $I=f\circ p$, where $p\colon\mathscr K\to  A[\mathscr K]/\mathcal M$ is the natural projection.
\end{definition}

\begin{remark}
If the moves in $\mathcal M$ are monomial then in Definition~\ref{def:skein_invariant} we can consider the map $f\colon \mathscr K/\mathcal M\to X$.
\end{remark}

The first examples of skein invariants were polynomial invariants of oriented classical links in $\mathbb R^2$.

\begin{example}[Polynomial skein invariants]\label{ex:polynomial_invariants}

\begin{enumerate}
\item For an oriented link $L$ its Conway polynomial $\nabla(L)\in\mathbb Z[z]$ is determined by the skein relations~\cite{Conway}
\[
\skcrro-\skcrlo \rightleftharpoons z\cdot\skcrvo,\quad \skcircarc \rightleftharpoons 0\cdot\skarc.
\]
\item The Jones polynomial of an oriented link $L$ is $X(L)=(-a)^{-3w(L)}\langle L\rangle\in\mathbb Z[a,a^{-1}]$, where $w(L)$ is the writhe of the link and  $\langle L\rangle$ is the Kauffman bracket~\cite{KauffmanBook}. The Kauffman bracket is determined by the skein relations
\[
\skcrr \rightleftharpoons a\cdot\skcrv+a^{-1} \skcrh,\quad \skcircle \rightleftharpoons (-a^2-a^{-2})\cdot\skempty.
\]
\item The HOMFLY--PT polynomial $PT(L)\in\mathbb Z[l,l^{-1},m,m^{-1}]$ of an oriented link $L$ is determined by the skein relations~\cite{HOMFLY,PT}.
\[
l\skcrro+l^{-1}\skcrlo \rightleftharpoons m\cdot\skcrvo,\quad \skcircarc \rightleftharpoons \frac{l+l^{-1}}m\cdot\skarc.
\]
\end{enumerate}

Below we will focus on monomial moves on knots.
\end{example}

\subsection{Existence of non-skein knot invariants}

First of all, note that any invariant is ``infinitely skein''.

\begin{proposition}\label{prop:infinite_skein}
For any knot invariant $I\colon\mathscr K\to X$ there exists a set of moves $\mathcal M$ and an injection $f\colon \mathscr K/\mathcal M\to X$ such that $I=f\circ p$, where $p\colon \mathscr K\to\mathscr K/\mathcal M$ is the natural projection.
\end{proposition}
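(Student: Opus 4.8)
The point of the statement is that the finiteness requirement of Definition~\ref{def:skein_invariant} has been dropped, so we are free to use an arbitrarily large (in general infinite) set of moves. The plan is to engineer $\mathcal M$ so that the equivalence relation $\sim_{\mathcal M}$ on $\mathscr K$ coincides exactly with the relation \emph{``having the same value under $I$''}. If this succeeds, the $\sim_{\mathcal M}$-classes are precisely the non-empty fibers of $I$, and the rule sending each class to the common $I$-value of its members is a well-defined injection $f\colon\mathscr K/\mathcal M\to X$ with $I=f\circ p$. Note that $\mathscr K$ is a set (knots are classes of finite diagrams), so $\mathcal M$ below is a legitimate, merely infinite, set of moves; this infinitude is exactly what separates the present statement from the genuine notion of skein invariant.

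To build $\mathcal M$ I would use monomial $0$-tangle moves, that is, moves $(T_1,T_2)$ in which $T_1,T_2$ are honest knot diagrams placed inside the disk $\mathbb D^2$ with $\partial T_1=\partial T_2=\varnothing$. Concretely, for every ordered pair of knots $(K_1,K_2)$ with $I(K_1)=I(K_2)$ I fix diagrams $D_1,D_2$ representing $K_1,K_2$ and put the move $(D_1,D_2)$ into $\mathcal M$. Since $I(K_1)=I(K_2)$ is symmetric, $\mathcal M$ is automatically closed under reversal, so $\sim_{\mathcal M}$ is an equivalence relation. Because the moves are monomial, the Remark following Definition~\ref{def:skein_invariant} lets us work directly with $\mathscr K/\mathcal M$ rather than $A[\mathscr K]/\mathcal M$.

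The two verifications are then short. First, $I$ is constant on $\sim_{\mathcal M}$-classes: Reidemeister moves and isotopies preserve $I$ because it is a knot invariant, and each move $(D_1,D_2)\in\mathcal M$ joins knots with equal $I$-value by construction; hence $I$ factors as $f\circ p$ for a well-defined $f$. Second, $f$ is injective: if $I(K_1)=I(K_2)$, then applying the move $(D_1,D_2)$ we placed in $\mathcal M$ to a diagram of $K_1$ that has first been Reidemeister-reduced to $D_1$ turns $K_1$ into $K_2$, so $K_1\sim_{\mathcal M}K_2$ and the two classes coincide.

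The one point that needs genuine care, and which I see as the main (if modest) obstacle, is checking that a $0$-tangle move really does what I want when applied locally to a \emph{connected} diagram. Applying $(D_1,D_2)$ to a diagram $D'$ requires a disk $B$ with $D'\cap B\cong D_1$; since $D_1$ is closed (it has no valence-$1$ vertices), $D'$ cannot meet $\partial B$, so $D'\cap B$ is a union of components of $D'$. As a knot diagram $D'$ has a single component, this forces $D'\cap B=D'$, so the move replaces the whole diagram and sends the knot represented by $D_1$ to the knot represented by $D_2$. This is precisely what makes every move of $\mathcal M$ preserve $I$ while realizing all the fiber identifications, completing the argument.
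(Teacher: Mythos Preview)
Your proposal is correct and takes essentially the same approach as the paper: the paper's entire proof is the single line ``we can take $\mathcal M=\{(K_1,K_2)\in\mathscr K\times\mathscr K \mid I(K_1)=I(K_2)\}$'', which is precisely your set of $0$-tangle moves. Your additional care in verifying that a $0$-tangle move applied to a knot diagram necessarily replaces the whole diagram, and that $f$ is well-defined and injective, fills in details the paper leaves implicit.
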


\begin{proof}
Indeed, we can take $\mathcal M=\{(K_1,K_2)\in\mathscr K\times\mathscr K \mid I(K_1)=I(K_2)\}$.
\end{proof}

Do there exist non-skein invariants? Yes. The argument is as follows. The set of moves as well as the set of skein invariants is countable. But the set of knot invariants has the cardinality of the continuum. 

Let us give a more concrete construction.

\begin{example}[Non-skein invariant]\label{ex:nonskein_invariant}
Let $\{(T_i,T'_i)\}_{i\in\mathbb N}$ be the set of all local moves such that $\partial T_i=\partial T'_i\ne\emptyset$. Let us construct two sequences of knots $\{\tilde K_i\}$ and $\{\tilde K'_i\}$. 

Consider a move $(T_i,T'_i)$. If all the closures $T_i\# S$ and $T'_i\# S$ of the tangles are equal in $\mathscr K$ then skip the move. Otherwise, take nonequivalent closures $K_i=T_i\# S$ and $K'_i=T'_i\# S$. There exists a prime knot $K''_i$ such that the knots $\tilde K_i=K_i\#K_i''$ and $\tilde K'_i=K'_i\#K_i''$ differ from the knots $\tilde K_j$, $\tilde K'_j$, $j<i$.

Then any invariant $I\colon \mathscr K\to \Z_2$ such that $f(\tilde K_i)=0$ and $f(\tilde K'_i)=1$, is not skein.
\end{example}

\subsection{Degree of moves}\label{sect:moves_order_filtration}

\begin{definition}\label{def:move_order}
A set of moves $\mathcal M$ has \emph{degree $n$} if for any move $M=(T,T')\in\mathcal M$ the tangles $T$ and $T'$ are $n$-tangles.

A skein invariant $I$ has \emph{degree $n$} if it can be determined by a set of moves of degree $n$. Denote the set of skein invariant $I$ of degree $n$ by $\mathscr I_n$.
\end{definition}

Let us describe skein invariants of small degrees.

\subsubsection{Skein invariants of degree $0$}

\begin{proposition}\label{prop:order_0}
Let $\mathcal M$ be a finite set of moves and $\bar{\mathcal M}\subset \mathcal M$ the subset of moves of degree $>0$. Let $I_{\mathcal M}\colon \mathscr K\to\mathscr K/\mathcal M=X_{\mathcal M}$ and $I_{\bar{\mathcal M}}\colon \mathscr K\to \mathscr K/\bar{\mathcal M}=X_{\bar{\mathcal M}}$ be the corresponding skein invariants. Then there exist finite distinct subsets $S_1,\dots, S_k\subset X_{\bar{\mathcal M}}$ such that for any knots $K$, $K'$ one has $I_{\mathcal M}(K)=I_{\mathcal M}(K')$ if and only if $I_{\bar{\mathcal M}}(K)=I_{\bar{\mathcal M}}(K')$ or $I_{\bar{\mathcal M}}(K),I_{\bar{\mathcal M}}(K')\in S_i$ for some $1\le i\le k$.
\end{proposition}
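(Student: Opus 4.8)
The plan is to show that enlarging $\bar{\mathcal M}$ to $\mathcal M$ only adds the effect of the degree-$0$ moves, and that on knots these moves act \emph{globally}, so that $X_{\mathcal M}$ is obtained from $X_{\bar{\mathcal M}}$ by gluing together finitely many points. First I would analyze what a degree-$0$ move does to a knot. A $0$-tangle has empty boundary, hence is a closed diagram in the disk. If $M=(T,T')$ is such a move and $B\subset F$ is a disk with $D\cap B\cong T$ for a knot diagram $D$, then $D$ does not meet $\partial B$, so $D\cap B$ is a union of closed components of $D$. Since a knot has a single component, either $D\cap B=\emptyset$ (the move does not apply) or $D\cap B=D$, in which case $D$ is equivalent to the knot underlying $T$ and the move replaces it by the knot underlying $T'$ (moves whose two sides carry different numbers of components never produce a knot and are therefore irrelevant to the quotient of $\mathscr K$). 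Consequently, each applicable degree-$0$ move merely identifies two knot classes, and the finitely many degree-$0$ moves in $\mathcal M$ yield a finite list of pairs of points $a_j=I_{\bar{\mathcal M}}(T_j)$, $b_j=I_{\bar{\mathcal M}}(T'_j)$ in $X_{\bar{\mathcal M}}$, $j=1,\dots,m$.

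Next I would invoke a general fact about quotients: if an equivalence relation $E$ on a set $Y$ is generated by a relation $E_1$ together with a set of pairs $P$, and $q\colon Y\to Y/E_1$ is the projection, then $Y/E$ is canonically $(Y/E_1)/{\sim}$, where $\sim$ is the equivalence relation on $Y/E_1$ generated by $\{(q(x),q(y))\mid (x,y)\in P\}$. Applying this with $Y=\mathscr K$, with $E_1$ the relation generated by the positive-degree moves (so that $Y/E_1=X_{\bar{\mathcal M}}$ and $q=I_{\bar{\mathcal M}}$), and with $P$ the set of knot-class pairs produced by the degree-$0$ moves in Step~1, gives $X_{\mathcal M}=X_{\bar{\mathcal M}}/{\sim}$, where $\sim$ is the equivalence relation on $X_{\bar{\mathcal M}}$ generated by the finitely many pairs $(a_j,b_j)$. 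Here one uses that Reidemeister equivalence is already built into $\mathscr K$ and that every degree-$0$ move step in a chain realizing $\sim_{\mathcal M}$ contributes exactly one of the pairs $(a_j,b_j)$.

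Finally I would describe $\sim$ combinatorially. Form the finite graph $G$ whose vertex set is $\{a_j,b_j\mid j=1,\dots,m\}\subset X_{\bar{\mathcal M}}$ and whose edges join $a_j$ to $b_j$, discarding the loops (those $j$ with $a_j=b_j$). The nontrivial $\sim$-classes are precisely the vertex sets of the connected components of $G$ having at least two elements; call these $S_1,\dots,S_k$. They are finite and pairwise disjoint subsets of $X_{\bar{\mathcal M}}$, and two points of $X_{\bar{\mathcal M}}$ are $\sim$-equivalent if and only if they coincide or both lie in a common $S_i$. Since $I_{\mathcal M}$ is the composition of $I_{\bar{\mathcal M}}$ with the projection $X_{\bar{\mathcal M}}\to X_{\bar{\mathcal M}}/{\sim}$, we obtain $I_{\mathcal M}(K)=I_{\mathcal M}(K')$ if and only if $I_{\bar{\mathcal M}}(K)\sim I_{\bar{\mathcal M}}(K')$, which is exactly the asserted characterization.

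The hard part will be Step~1: one must argue carefully that a degree-$0$ move really can only be applied to an entire knot (this rests on the connectivity of a knot together with the emptiness of the boundary of a $0$-tangle) and handle cleanly the moves relating diagrams with differing numbers of components. Once the global nature of degree-$0$ moves is pinned down, Steps~2 and~3 are a routine application of the universal property of quotients and an elementary analysis of the equivalence relation generated by finitely many pairs.
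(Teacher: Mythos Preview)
Your argument is correct and follows the same line as the paper's proof: both observe that the degree-$0$ moves in $\mathcal M\setminus\bar{\mathcal M}$ act on knots only by identifying finitely many pairs of classes in $X_{\bar{\mathcal M}}$, and then take the $S_i$ to be the nontrivial equivalence classes (equivalently, the nontrivial fibers of the projection $X_{\bar{\mathcal M}}\to X_{\mathcal M}$). Your write-up is in fact more careful than the paper's, which simply asserts without argument that the degree-$0$ moves are pairs $(K_i,K'_i)$ of knots; your Step~1 justifies this, and your graph-theoretic description of the $\sim$-classes is equivalent to the paper's fiber description.
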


\begin{proof}
    Since $\bar{\mathcal M}\subset \mathcal M$, there is a natural surjection $p\colon X_{\bar{\mathcal M}}\to X_{\mathcal M}$. The map $p$ is determined by the set $\mathcal M\setminus\bar{\mathcal M}=\{(K_i,K'_i)\}$ of moves of degree $0$, where $K_i$ and $K'_i$ are some knots. These moves glue the values $I_{\bar{\mathcal M}}(K_i)$ and $I_{\bar{\mathcal M}}(K'_i)\in X_{\bar{\mathcal M}}$. Then we set the subsets $S_i$ to be the preimages $p^{-1}(x)$, $x\in X_{\mathcal M}$, which contain two or more elements.
\end{proof}

\begin{corollary}
Let $I\in\mathscr I_0$. Then there exist finite distinct subsets $S_1,\dots, S_k\subset \mathscr K$ such that for any knots $K$, $K'$ one has $I(K)=I(K')$ if and only if $K=K'$ or $K,K'\in S_i$ for some $1\le i\le k$.
\end{corollary}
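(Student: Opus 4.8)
The plan is to obtain the statement as a direct specialization of Proposition~\ref{prop:order_0}, applied to a presentation of $I$ whose ``higher-degree part'' is empty. First I would unwind the hypothesis $I\in\mathscr I_0$. By Definition~\ref{def:move_order} this means that $I$ can be determined by a finite set of moves $\mathcal M$ all of which have degree $0$; concretely, there is an injection $f\colon\mathscr K/\mathcal M\to X$ with $I=f\circ p$, where $p\colon\mathscr K\to\mathscr K/\mathcal M$ is the natural projection (we are in the monomial setting, so $\mathscr K/\mathcal M$ is the relevant quotient). The crucial observation is that, since every move of $\mathcal M$ has degree $0$, the subset $\bar{\mathcal M}\subset\mathcal M$ of moves of degree $>0$ is empty.

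Next I would identify the coarser target. With $\bar{\mathcal M}=\emptyset$ there are no relations to impose, so $X_{\bar{\mathcal M}}=\mathscr K/\bar{\mathcal M}=\mathscr K$ and the associated invariant $I_{\bar{\mathcal M}}\colon\mathscr K\to\mathscr K$ is the identity map. Applying Proposition~\ref{prop:order_0} to this $\mathcal M$ then furnishes finite distinct subsets $S_1,\dots,S_k\subset X_{\bar{\mathcal M}}=\mathscr K$ such that $I_{\mathcal M}(K)=I_{\mathcal M}(K')$ if and only if $I_{\bar{\mathcal M}}(K)=I_{\bar{\mathcal M}}(K')$ or $I_{\bar{\mathcal M}}(K),I_{\bar{\mathcal M}}(K')\in S_i$ for some $i$. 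Substituting $I_{\bar{\mathcal M}}=\mathrm{id}$ turns the first alternative into $K=K'$ and the second into $K,K'\in S_i$.

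Finally I would transfer this equivalence from $I_{\mathcal M}=p$ to $I$ itself. Since $I=f\circ p$ with $f$ injective, for any knots $K,K'$ we have $I(K)=I(K')$ if and only if $p(K)=p(K')$, that is, if and only if $I_{\mathcal M}(K)=I_{\mathcal M}(K')$. Combining this with the previous paragraph yields $I(K)=I(K')$ exactly when $K=K'$ or $K,K'\in S_i$ for some $1\le i\le k$, which is the desired conclusion.

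I expect the only delicate point to be the bookkeeping around the empty set of moves: one must recognize that $\bar{\mathcal M}=\emptyset$ forces $X_{\bar{\mathcal M}}=\mathscr K$ and $I_{\bar{\mathcal M}}=\mathrm{id}$, after which the statement is a verbatim reading of Proposition~\ref{prop:order_0} through the injection $f$. Beyond this identification I foresee no substantive obstacle.
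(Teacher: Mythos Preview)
Your proposal is correct and follows exactly the paper's approach: the paper's proof consists of the single observation that for $I\in\mathscr I_0$ one has $\bar{\mathcal M}=\emptyset$ and hence $X_{\bar{\mathcal M}}=\mathscr K$, after which the corollary is read off from Proposition~\ref{prop:order_0}. Your write-up simply unpacks this in more detail, including the (implicit in the paper) passage from $I_{\mathcal M}$ to $I$ via the injection $f$.
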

\begin{proof}
    Indeed, in this case $\bar{\mathcal M}=\emptyset$ and $X_{\bar{\mathcal M}}=\mathscr K$.
\end{proof}

\begin{definition}\label{def:reduced_move_set}
A set of moves $\mathcal M$ is \emph{reduced} if $\mathcal M=\bar{\mathcal M}$.
\end{definition}

\subsubsection{Additivity of skein invariants}

\begin{definition}\label{def:additive_invariant}
A knot invariant $I\colon \mathscr K\to X$ is called \emph{additive} if $X$ has a commutative monoid structure and $I(K\# K')=I(K)+I(K')$ for any $K,K'\in\mathscr K$.
\end{definition}

\begin{theorem}\label{thm:reduced_additive_invariant}
Let $I\colon \mathscr K\to X$ be a reduced skein knot invariant and $X=I(\mathscr K)$. Then $I$ is additive.
\end{theorem}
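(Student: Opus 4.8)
The plan is to transport a commutative monoid structure from the quotient $\mathscr K/\mathcal M$ to $X$ along the bijection $f$, and then read off additivity for free. Since $I=f\circ p$ and $X=I(\mathscr K)=f(p(\mathscr K))=f(\mathscr K/\mathcal M)$, the injection $f$ restricts to a bijection $\mathscr K/\mathcal M\to X$. Connected sum makes $(\mathscr K,\#)$ a commutative monoid whose identity is the class of the unknot. If I can show that $\#$ descends to a well-defined operation on $\mathscr K/\mathcal M$, then $p\colon(\mathscr K,\#)\to(\mathscr K/\mathcal M,\#)$ is a monoid homomorphism; transporting the quotient operation to $X$ via $f$ turns $f$ into an isomorphism of commutative monoids, and the required identity $I(K\#K')=I(K)+I(K')$ becomes precisely the statement that $p$ is a homomorphism. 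Thus the whole theorem reduces to a single congruence statement.

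The key point to establish is that, for a reduced set of moves $\mathcal M$, the equivalence $\sim_{\mathcal M}$ is a congruence with respect to $\#$: that is, $K\sim_{\mathcal M}L$ and $K'\sim_{\mathcal M}L'$ together imply $K\#K'\sim_{\mathcal M}L\#L'$. By commutativity of $\#$ and transitivity of $\sim_{\mathcal M}$ it is enough to prove the one-sided version $K\sim_{\mathcal M}L\Rightarrow K\#K'\sim_{\mathcal M}L\#K'$ and then apply it twice. To prove this one-sided version I would fix a witnessing sequence $K=D_1,D_1',\dots,D_l,D_l'=L$ as in the definition of $\sim_{\mathcal M}$, in which consecutive diagrams differ either by Reidemeister moves and isotopy or by a single local move from $\mathcal M$. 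Such a sequence involves only finitely many application disks $B$. I would then form $K\#K'$ by attaching a diagram of $K'$ along a small band at a point of an arc of $K$ chosen, after an isotopy, to lie outside every one of these finitely many disks. Replaying the same sequence of Reidemeister moves and $\mathcal M$-moves inside $K\#K'$, performed entirely in the $K$-part of the diagram and leaving the $K'$-summand untouched, carries $K\#K'$ to $L\#K'$, which yields the desired equivalence.

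The main obstacle, and the reason the hypothesis \emph{reduced} is indispensable, is exactly this localization step. A degree-$0$ move is a pair of entire closed diagrams $(K_i,K_i')$, whose application consumes a whole unicursal component rather than a bounded tangle; it has no boundary that could be used to separate it from the connect-sum region, so a global identification $K_i\sim_{\mathcal M}K_i'$ need not survive after summing with $K'$, and $\#$ would then fail to descend to the quotient. For a move of positive degree the tangles $T,T'$ have nonempty boundary and are confined to a disk $B$, which can be pushed away from the connect-sum site; this is what makes the replay argument valid. The genuine technical crux is therefore the (intuitively evident but slightly delicate) verification that the finitely many move-disks and the connect-sum band can be made simultaneously disjoint up to isotopy. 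Once this is granted, the argument above goes through, $\#$ descends to $\mathscr K/\mathcal M\cong X$, and $I$ is additive with respect to the induced monoid structure.
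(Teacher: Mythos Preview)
Your proposal is correct and follows essentially the same line as the paper: reduce the theorem to the congruence statement $K_1\sim_{\mathcal M}K_2\Rightarrow K_1\#K'\sim_{\mathcal M}K_2\#K'$ (this is the paper's Lemma~\ref{lem:additive_invariant}), and then define $[K_1]+[K_2]=[K_1\#K_2]$ on $\mathscr K/\mathcal M\cong X$.

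There is one small execution difference worth noting. You try to choose the connect-sum site once, ``outside every one of these finitely many disks,'' and then replay the entire sequence keeping $K'$ fixed; you correctly flag this simultaneous-disjointness as the delicate point, and indeed as stated it is a little awkward since the move-disks live in \emph{different} diagrams along the chain. The paper sidesteps this entirely: at each $\mathcal M$-move step $\tilde K'_i\to\tilde K_{i+1}$ it freshly attaches $K'$ to any arc of $\tilde K'_i$ lying outside that single move-disk (such an arc exists precisely because the degree is positive), and then bridges the intermediate Reidemeister segments using only the well-definedness of connected sum of \emph{knots} (both $\hat K_i$ and $\hat K'_i$ represent $\tilde K_i\#K'$, hence are Reidemeister-equivalent regardless of where $K'$ was placed). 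This local re-attachment removes your ``genuine technical crux'' altogether and makes the argument cleaner; otherwise your outline matches the paper.
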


\begin{lemma}\label{lem:additive_invariant}
Let $\mathcal M$ be a reduced set of moves. Then $K_1\sim_{\mathcal M} K_2$ implies $K_1\# K'\sim_{\mathcal M} K_2\# K'$ for any $K'\in\mathscr K$.
\end{lemma}
\begin{proof}
    The condition $K_1\sim_{\mathcal M} K_2$ means there is a sequence of diagrams $K_1=\tilde K_1, \tilde K'_1,\dots, \tilde K_l,\tilde K'_l=K_2$ such that the diagrams $\tilde K_i$ and $\tilde K'_i$ are connected by a sequence of Reidmeister moves, and $\tilde K'_i$ and $\tilde K_{i+1}$ are connected by a move $M_i\in\mathcal M$. Since the move $M_i$ has positive degree, there is an arc in $\tilde K'_i$ which lies outside the move. We place the knot $K'$ on this arc and get diagrams $\hat K'_i$ and $\hat K_{i+1}$ connected by the move $M_i$. Since the diagrams $\tilde K_i$ and $\tilde K'_i$ present the same knot, the diagrams $\hat K_i=\tilde K_i\# K'$ and $\hat K'_i=\tilde K_i\# K'$ are connected by a sequence of Reidemester moves. Thus, the sequence $K_1\# K'=\hat K_1, \hat K'_1,\dots \hat K_l, \hat K'_l=K_2\# K'$ establishes the equivalence $K_1\# K'\sim_{\mathcal M} K_2\# K'$.    
\end{proof}

\begin{proof}[Proof of Theorem~\ref{thm:reduced_additive_invariant}]
Let $\mathcal M$ be a reduced set of moves which determines the invariant $M$. Then the coefficient set $X$ is identified with the set $\mathscr K/\mathcal M$ of equivalence classes of knots modulo the moves of $\mathcal M$. Define a binary operation on $X$ by the formula $[K_1]+[K_2]=[K_1\# K_2]$, $K_1,K_2\in\mathscr K$. The operation is well defined, because for any $K_1\sim_{\mathcal M} K_2$ and $K'_1\sim_{\mathcal M} K'_2$ we have $K_1\# K_2\sim_{\mathcal M} K_1\# K'_2\sim_{\mathcal M} K'_1\# K'_2$ by Lemma~\ref{lem:additive_invariant}. The operation defines on $X$ a monoid structure because the operation $\#$ of connected sum does. The equality $I(K\# K')=I(K)+I(K')$ follows from the definition of the binary operation. 
\end{proof}

\begin{remark}
The set of additive knot invariants has the cardinality of the continuum. Hence, there are non-skein additive invariants.
\end{remark}

\subsubsection{Skein invariants of degree $1$}

\begin{example}\label{ex:order_1_invariant}
Let $P$ be a finite subset of the set $\mathscr P$ of prime knots and $\phi\colon P\to M$ be a map to a commutative monoid $M$. Let us define a knot invariant $I_{P,\phi}\colon \mathscr K\to M\times \Z[\N]$ as follows. Enumerate the prime knots $\mathscr P=\{ K_1, K_2,\dots\}$ so that $P=\{K_1,\dots, K_n\}$. For a knot $K=\sum_i a_i K_i\in\mathscr K$ set
\[
I_{P,\phi}=\left(\sum_{i=1} a_i\phi(K_i),\sum_{i=n+1}^\infty a_i\cdot[i-n]\right).
\]
Then $I_{P,\phi}\in\mathscr I_1$.
Indeed, consider the knots in $P$ as long knots. The map $\phi$ extends to a monoid homomorphism $\phi\colon\mathbb Z_{\ge 0}[P]\to M$. The image $\mathrm{im}\phi$ is finitely generated. By Redey's theorem~\cite{RG} $\mathrm{im}\phi$ is finitely presented, hence, $\mathrm{im}\phi\simeq \mathbb Z_{\ge 0}[P]/\sim$ where the equivalence $\sim$ is generated by a finite set of relations $T_i\sim T'_i$. The linear combinations $T_i,T'_i\in \mathbb Z_{\ge 0}[P]$ can be identified with connected sums of prime knots from $P$.  Then $I_{P,\phi}$ is generated by the set of moves 
$\mathcal M=\{(T_i,T'_i)\}\subset \mathscr I_1.$
\end{example}

\begin{proposition}\label{prop:order_1_invariant}
Let $I\in\mathscr I_1$. Then $I$ is equivalent to the invariant $I_{P,\phi}$ for some finite $P\subset\mathscr P$ and $\phi\colon P\to M$  in the following sense: there is a bijection $f\colon \mathrm{im} I_{P,\phi}\to \mathrm{im} I$ such that $I=f\circ I_{P,\phi}$.
\end{proposition}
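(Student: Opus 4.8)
The plan is to realize both $I$ and the model invariant $I_{P,\phi}$ as the natural projection onto one and the same quotient of the monoid of knots, and then to transport everything through the defining injection of $I$.

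First I would use unique prime decomposition of classical knots to identify $\mathscr K$, as a commutative monoid under connected sum $\#$, with the free commutative monoid $\mathbb Z_{\ge 0}[\mathscr P]$ on the set $\mathscr P=\{K_1,K_2,\dots\}$ of prime knots. Next I would analyze what a degree-$1$ monomial move does at the level of this monoid. A $1$-tangle occurring in a knot diagram as $D\cap B$ is a long knot, and gluing it to its complement exhibits $D$ as a connected sum $D=x\# T$; thus applying a move $(T_i,T'_i)$ to a diagram is possible exactly when the prime factorization of the knot contains that of $T_i$, and it then replaces the summand $[T_i]$ by $[T'_i]$. Consequently $K\sim_{\mathcal M}K'$ holds if and only if the corresponding elements of $\mathbb Z_{\ge 0}[\mathscr P]$ are related by the monoid congruence generated by the finite set of relations $\{[T_i]\sim[T'_i]\}$; the Reidemeister moves contribute nothing beyond the identification of diagrams representing the same knot, which is already built into $\mathscr K$. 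I expect this geometric-to-algebraic translation — verifying that $\sim_{\mathcal M}$ is precisely the generated congruence, with applicability of a move matching ``being a connected summand'' — to be the main point requiring care.

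With this in hand, finiteness of $\mathcal M$ does the rest. Let $P=\{K_1,\dots,K_n\}\subset\mathscr P$ be the finite set of prime knots occurring in the tangles of $\mathcal M$. All relations lie in the sub-monoid $\mathbb Z_{\ge 0}[P]$ and act trivially on the complementary free factor, so the generated congruence splits and
\[
\mathscr K/\mathcal M\;\cong\;\bigl(\mathbb Z_{\ge 0}[P]/\!\sim\bigr)\times\mathbb Z_{\ge 0}[\mathscr P\setminus P].
\]
The first factor is a finitely generated (hence, by Rédei's theorem, finitely presented) commutative monoid $M$, and the quotient map sends $P$ onto a generating set, giving a map $\phi\colon P\to M$; the second factor is identified with $\mathbb Z_{\ge 0}[\mathbb N]$ via $K_{n+i}\mapsto[i]$. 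Comparing with Example~\ref{ex:order_1_invariant}, under this identification the natural projection $p\colon\mathscr K\to\mathscr K/\mathcal M$ is exactly the invariant $I_{P,\phi}$, and $\mathrm{im}\,I_{P,\phi}$ is identified with $\mathscr K/\mathcal M$.

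Finally I would invoke the definition of skein invariant: $I=g\circ p$ for some injection $g\colon\mathscr K/\mathcal M\to X$. Since $p=I_{P,\phi}$ is surjective onto $\mathscr K/\mathcal M=\mathrm{im}\,I_{P,\phi}$, the restriction $f:=g|_{\mathrm{im}\,I_{P,\phi}}$ is a bijection onto $g(\mathscr K/\mathcal M)=\mathrm{im}\,I$, and $I=g\circ p=f\circ I_{P,\phi}$, which is the asserted equivalence. The only loose end to tidy is to justify assuming the moves of $\mathcal M$ are long-knot relations in the first place: a degree-$1$ move whose output tangle carries a closed component would leave $\mathscr K$, so such moves are either inapplicable to knots or can be discarded without changing $\sim_{\mathcal M}$ on $\mathscr K$.
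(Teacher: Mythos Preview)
Your proposal is correct and follows essentially the same approach as the paper: take $P$ to be the primes dividing the $T_i,T'_i$, set $M=\mathbb Z_{\ge 0}[P]/\!\sim$ with $\phi$ the natural projection, and verify that $I$ coincides with $I_{P,\phi}$. You supply considerably more detail than the paper --- the identification of $\sim_{\mathcal M}$ with the monoid congruence, the product splitting $\mathscr K/\mathcal M\cong M\times\mathbb Z_{\ge 0}[\mathscr P\setminus P]$, the explicit construction of $f$ from the defining injection, and the disposal of $1$-tangles with closed components --- all of which the paper leaves implicit.
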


\begin{proof}
   The invariant $I$ is determined by a set $\mathcal M=\{(T_i,T'_i)\}_{i=1}^l$ of moves of degree $1$. Then the tangles $T_i,T'_i$ are long knots. Let $P=\{K_1,\dots,K_n\}$ be the set of prime knots which divide one of these tangles and $A=\mathbb Z_{\ge 0}[P]$. Let $\sim$ be the congruence in $A$ generated by the relations $T_i\sim T'_i$ and  $M=A/\sim$ the quotient monoid. Denote the natural projection $P\to M$ by $\phi$. Then $I$ is equivalent to the skein invariant $I_{P,\phi}$. 
\end{proof}


\subsubsection{Stabilization of degree}

\begin{proposition}\label{prop:order_stabilization}
For any $n\ge 1$ $\mathscr I_n\subset \mathscr I_{n+1}$.
\end{proposition}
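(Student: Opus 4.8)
The plan is to show that every set of moves $\mathcal M$ of degree $n$ can be replaced by a set $\mathcal M'$ of degree $n+1$ inducing exactly the same equivalence relation on $\mathscr K$. Once this is done, for any $I\in\mathscr I_n$ we may write $I=f\circ p$ with $p\colon\mathscr K\to\mathscr K/\mathcal M$ and $f$ injective; since $\sim_{\mathcal M}$ and $\sim_{\mathcal M'}$ coincide, the quotient $\mathscr K/\mathcal M'$ is literally the same set as $\mathscr K/\mathcal M$, so $I=f\circ p'$ with $p'\colon\mathscr K\to\mathscr K/\mathcal M'$, whence $I\in\mathscr I_{n+1}$. Note that $\mathcal M'$ is finite whenever $\mathcal M$ is, so the degree bookkeeping required by Definition~\ref{def:move_order} is respected.

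First I would define the \emph{stabilization} of a move. Given $(T_i,T_i')\in\mathcal M$ with $T_i,T_i'\in\mathcal T_n$, adjoin two extra adjacent boundary points at the end of the counterclockwise enumeration of $X$ and join them by a short boundary-parallel trivial arc $a_i$ that is disjoint from, and crosses nothing in, $T_i$ (respectively $T_i'$). This produces $(n+1)$-tangles $\hat T_i=T_i\cup a_i$ and $\hat T_i'=T_i'\cup a_i$ with $\partial\hat T_i=\partial\hat T_i'$, hence a monomial move $\hat M_i=(\hat T_i,\hat T_i')$ of degree $n+1$. I set $\mathcal M'=\{\hat M_i\}$.

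Next I would prove $K_1\sim_{\mathcal M}K_2\iff K_1\sim_{\mathcal M'}K_2$ by realizing a single application of one move set inside the other; the Reidemeister and isotopy steps permitted in the definition of $\sim$ then assemble these into full equivalences. For the implication $(\Rightarrow)$, consider an application of $M_i$ inside a disk $B$ of a diagram $D$, with $T=D\cap B\cong T_i$. Because $n\ge1$, the tangle $T_i$ has $2n\ge2$ endpoints, so strands of $D$ leave $B$ and there is diagram outside $B$; isotoping a small boundary-parallel piece of one such strand close to $\partial B$ and enlarging $B$ to a disk $B'$ captures a trivial arc $a$ with $\hat T=D\cap B'\cong\hat T_i$. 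Applying $\hat M_i$ at $B'$ replaces the $T_i$-part by $T_i'$ and leaves $a$ untouched, so $R_{\hat M_i}(D,\hat T)=R_{M_i}(D,T)$. For $(\Leftarrow)$, an application of $\hat M_i$ occurs at a disk $B'$ with $D\cap B'\cong T_i\cup a$ for a trivial arc $a$; shrinking $B'$ to a disk $B$ that excludes $a$ (pushing $a$ out by an isotopy, possible since $a$ is unknotted and uncrossed) gives $D\cap B\cong T_i$, and applying $M_i$ reproduces the same diagram.

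The main obstacle is the forward direction: one must guarantee that the extra trivial strand demanded by the degree-$(n+1)$ move can always be borrowed from the diagram \emph{without changing the knot}. This is precisely where the hypothesis $n\ge1$ is used — a positive-degree tangle has nonempty boundary, so there is always diagram outside $B$ from which a trivial arc can be taken, whereas for $n=0$ no such strand need be available (consistent with the special behaviour of degree $0$ in Proposition~\ref{prop:order_0}). The remaining points — that the borrowed arc can be made trivial and disjoint from the move region, and that it is returned unchanged by the move — are routine isotopy verifications.
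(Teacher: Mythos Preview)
Your proof is correct and follows essentially the same approach as the paper's: both define the stabilization $M^+$ of a move $M$ by adjoining a trivial boundary-parallel arc, and both prove equivalence of the resulting skein relations by showing that an application of $M$ can be simulated by $M^+$ via borrowing a piece of an outgoing strand (which exists precisely because $n\ge 1$) as the extra trivial arc, while conversely $M^+$ reduces to $M$ by discarding that arc. The paper presents the two directions with pictures (labelled ``stabilization'' and ``destabilization'') rather than in words, but the content is the same.
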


\begin{proof}
Given a move $M=(T,T')$ of degree $n>0$, we can assign to it a move $M^+$ of degree $n+1$ called the \emph{stabilization of the move $M$} (Fig.~\ref{fig:move_stabilization}). It is easy to see that the move $M$ implies the move $M^+$.

\begin{figure}[h]
\centering\includegraphics[width=0.25\textwidth]{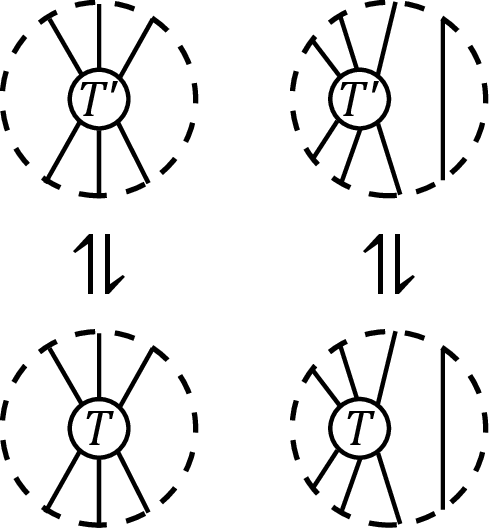}
\caption{Stabilization of a move}\label{fig:move_stabilization}
\end{figure}

On the other hand, the move $M$ can be expressed using the move $M^+$ (Fig.~\ref{fig:move_destabilization}).

\begin{figure}[h]
\centering\includegraphics[width=0.4\textwidth]{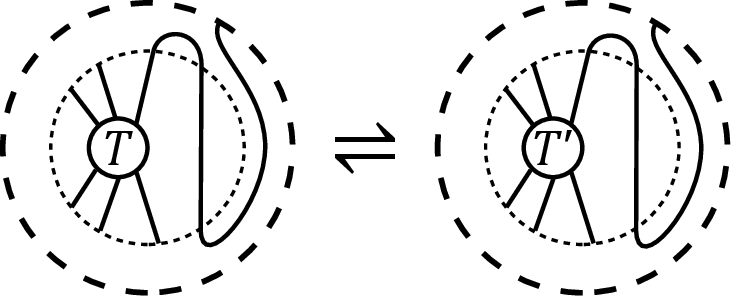}
\caption{Destabilization of a move}\label{fig:move_destabilization}
\end{figure}

Let $\mathcal M=\{M_i\}\subset \mathscr I_n$. Consider the set of moves $\mathcal M^+=\{M^+_i\}\subset\mathscr I_{n+1}$. Then the moves of $\mathcal M$ can be expressed by moves of $\mathcal M^+$ and vice versa. Hence, the skein invariants induced by the sets $\mathcal M$ and $\mathcal M^+$ are equivalent. Thus,  $\mathscr I_n\subset \mathscr I_{n+1}$.
\end{proof}

Thus, we have an increasing filtration of skein invariants: 
\[
\mathscr I_1\subset \mathscr I_{2}\subset \mathscr I_{3}\subset\cdots.
\]
A natural question is whether this filtration stabilizes.

\subsection{Partial order on skein invariants}\label{sect:partial_order}

There is a natural partial order on the set of skein invariants. 

\begin{definition}\label{def:partial_order}
A knot invariant $I\colon\mathscr K\to X$ is \emph{weaker} than a knot invariant $I'\colon\mathscr K\to X'$ if there is a map $f\colon X'\to X$ such that $I=f\circ I'$. In this case we write $I\preceq I'$.

A set of moves $\mathcal M'$ is \emph{finer} than a set of moves $\mathcal M$ if any move $M'\in\mathcal M'$ can be expressed as a sequence of moves from $\mathcal M$ and Reidemeister moves. In this case we write $\mathcal M\preceq\mathcal M'$.
\end{definition}

\begin{proposition}\label{prop:partial_order}
1. If $\mathcal M\preceq\mathcal M'$ then $I_{\mathcal M}\preceq I_{\mathcal M'}$.\\
2. If $I\preceq I'$ are skein invariants then there exist sets of moves $\mathcal M\preceq\mathcal M'$ such that $I= I_{\mathcal M}$ and  $I'= I_{\mathcal M'}$.
\end{proposition}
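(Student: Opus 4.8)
The plan is to translate both statements into the language of the fiber-equivalence relations on $\mathscr K$ and to exploit the principle that finer moves produce a finer equivalence, hence a stronger invariant. Throughout, for a set of moves $\mathcal N$ I write $\sim_{\mathcal N}$ for the induced equivalence on $\mathscr K$, recalling that $X_{\mathcal N}=\mathscr K/\mathcal N$ with $I_{\mathcal N}$ the natural projection; for an invariant $J$ I write $\sim_J$ for the partition of $\mathscr K$ into the fibers of $J$. Regarding each equivalence relation as a subset of $\mathscr K\times\mathscr K$, the inequality $J\preceq J'$ is equivalent to the inclusion $\sim_{J'}\subseteq\sim_J$, since $J=g\circ J'$ forces $J'(K_1)=J'(K_2)\Rightarrow J(K_1)=J(K_2)$.

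For Part 1, I would first observe that the hypothesis $\mathcal M\preceq\mathcal M'$ (every move of $\mathcal M'$ is a sequence of moves of $\mathcal M$ and Reidemeister moves) yields $\sim_{\mathcal M'}\subseteq\sim_{\mathcal M}$: a single application of a move $M'\in\mathcal M'$ inside an $\mathcal M'$-chain may be replaced by the corresponding $\mathcal M$-sequence, so any chain realizing $K_1\sim_{\mathcal M'}K_2$ is converted into a chain realizing $K_1\sim_{\mathcal M}K_2$. Consequently the identity of $\mathscr K$ descends to a surjection $f\colon\mathscr K/\mathcal M'\to\mathscr K/\mathcal M$, $[K]_{\mathcal M'}\mapsto[K]_{\mathcal M}$, and $I_{\mathcal M}=f\circ I_{\mathcal M'}$, which is exactly the assertion $I_{\mathcal M}\preceq I_{\mathcal M'}$.

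For Part 2, I would start from finite move sets realizing the two invariants. Since $I$ and $I'$ are skein, Definition~\ref{def:skein_invariant} supplies finite sets $\mathcal N$, $\mathcal M'$ and injections identifying $I$ with $I_{\mathcal N}$ and $I'$ with $I_{\mathcal M'}$; after composing with these injections (and replacing the targets by the images, so that literal equalities become available) I may assume $I=I_{\mathcal N}$ and $I'=I_{\mathcal M'}$, whence $\sim_I\,=\,\sim_{\mathcal N}$ and $\sim_{I'}\,=\,\sim_{\mathcal M'}$. The hypothesis $I\preceq I'$ now reads $\sim_{\mathcal M'}\,=\,\sim_{I'}\,\subseteq\,\sim_I\,=\,\sim_{\mathcal N}$. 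I then set $\mathcal M=\mathcal N\cup\mathcal M'$; this is finite, and since $\mathcal M'\subseteq\mathcal M$ every move of $\mathcal M'$ is already a move of $\mathcal M$, so $\mathcal M\preceq\mathcal M'$ holds immediately (a length-one sequence).

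The remaining and central point is to check that enlarging $\mathcal N$ by $\mathcal M'$ does not disturb the invariant, i.e.\ $I_{\mathcal M}=I_{\mathcal N}=I$. Here $\sim_{\mathcal M}$ is the join of $\sim_{\mathcal N}$ and $\sim_{\mathcal M'}$, namely the transitive closure of $\sim_{\mathcal N}\cup\sim_{\mathcal M'}$; but the inclusion $\sim_{\mathcal M'}\subseteq\sim_{\mathcal N}$ established above gives $\sim_{\mathcal N}\cup\sim_{\mathcal M'}\,=\,\sim_{\mathcal N}$, which is already an equivalence relation. Hence $\sim_{\mathcal M}\,=\,\sim_{\mathcal N}$ and $I_{\mathcal M}=I$, while $I_{\mathcal M'}=I'$ by construction, so $\mathcal M\preceq\mathcal M'$ are the desired sets. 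I expect this last verification, together with the bookkeeping of the target-identifications that legitimize the literal equalities $I=I_{\mathcal M}$ and $I'=I_{\mathcal M'}$, to be the only delicate point; the genuinely geometric content sits in Part~1, in replacing an $\mathcal M'$-move by an $\mathcal M$-sequence, and that is immediate from the definition of $\preceq$ on move sets.
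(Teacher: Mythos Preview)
Your proof is correct and follows exactly the same strategy as the paper: for Part~1 you unwind the definitions, and for Part~2 you pick realizing move sets and take $\mathcal M=\mathcal N\cup\mathcal M'$, then check that adjoining $\mathcal M'$ does not change the equivalence relation because $\sim_{\mathcal M'}\subseteq\sim_{\mathcal N}$. The only difference is that you spell out in full the verification $I_{\mathcal M}=I_{\mathcal N}$ via the join of equivalence relations, whereas the paper states this in one line; the underlying idea is identical.
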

\begin{proof}
    The first statement follows from definitions.

    Let $I\preceq I'$ and $I=I_{\mathcal M_0}$ and $I'=I_{\mathcal M'}$. Denote $\mathcal M=\mathcal M_0\cup\mathcal M'$. Then $I_{\mathcal M_0}$ is equivalent to $I_{\mathcal M}$, and $\mathcal M\preceq \mathcal M'$.
\end{proof}

\begin{remark}
1. $I_{\mathcal M}\preceq I_{\mathcal M'}$ does not imply $\mathcal M\preceq\mathcal M'$. For example, consider the unknotting moves $\mathcal M=\{\Delta\}$, $\mathcal M'=\{CC\}$ (Fig.~\ref{fig:unknotting_moves}).

2. $\mathscr I$ is a lower semilattice (with the meet operation given by the union of sets of moves).
\end{remark}

\subsubsection{Unknotting moves}

The weakest skein invariant is the trivial invariant. It can be induced by different sets of moves. 

\begin{definition}\label{def:unknotting_move}
A set of moves $\mathcal M$ is \emph{unknotting} if $I_{\mathcal M}$ is trivial.
\end{definition}

\begin{example}[Unknotting moves for classical knots]
Each of the moves in Fig.~\ref{fig:unknotting_moves} is unknotting for classical knots in $\mathbb R^2$~\cite{Aida,AYS,HNT,Matveev,Murakami,MN,Shibuya}.

\begin{figure}[h]
\centering\includegraphics[width=0.9\textwidth]{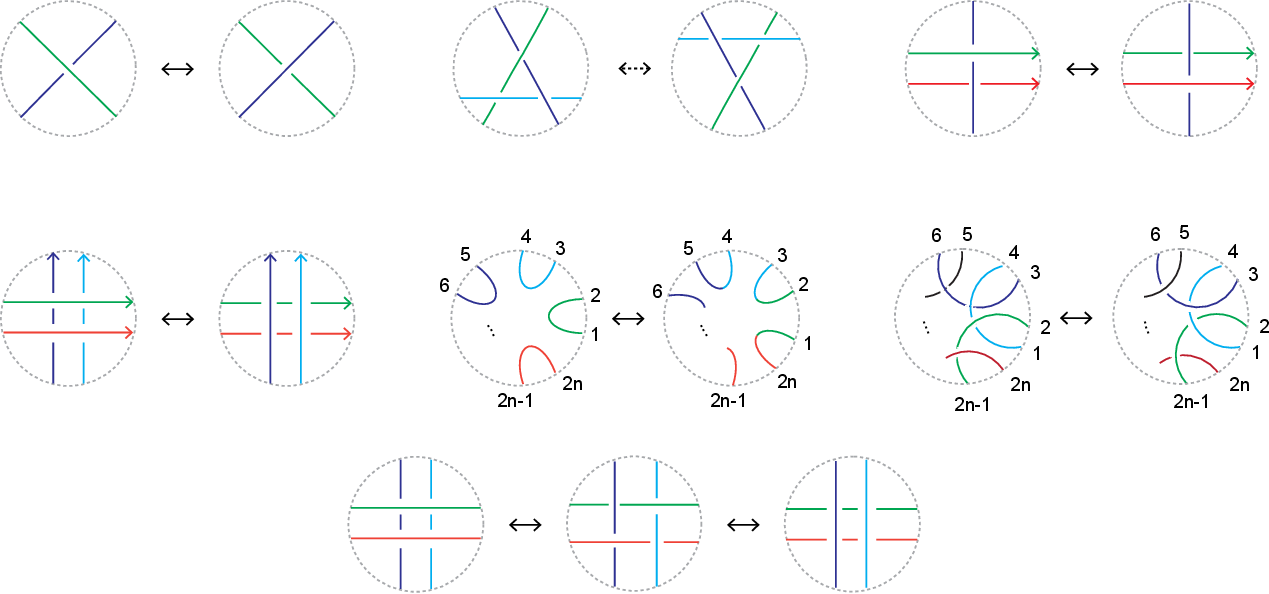}
\caption{Unknotting moves: $CC$-move (crossing change), $\Delta$-move~\cite{Matveev,MN}, $\Gamma$-move~\cite{Shibuya}; $\sharp$-move~\cite{Murakami}, $H(n)$-move~\cite{HNT}, $n$-gon move~\cite{Aida}; and the diagonal move~\cite{AYS}}\label{fig:unknotting_moves}
\end{figure}    
\end{example}

\begin{example}[Unknotting moves for virtual knots]
The following sets of moves are unknotting for virtual knots:
\begin{itemize}
    \item the forbidden move $F^m$~\cite{YI} (Fig.~\ref{fig:forbidden_moves1});
    \item the CF-move~\cite{Oikawa};
    \item the pair of forbidden moves $\mathcal M=\{F^o, F^u\}$~\cite{Kanenobu,Nelson};
    \item any of the virtualizing $\Delta$-move, virtualized $\sharp$-move, virtualized pass move~\cite{NNSW} (Fig.~\ref{fig:virtual_unknotting_moves}).
\end{itemize}

\begin{figure}[h]
\centering\includegraphics[width=0.3\textwidth]{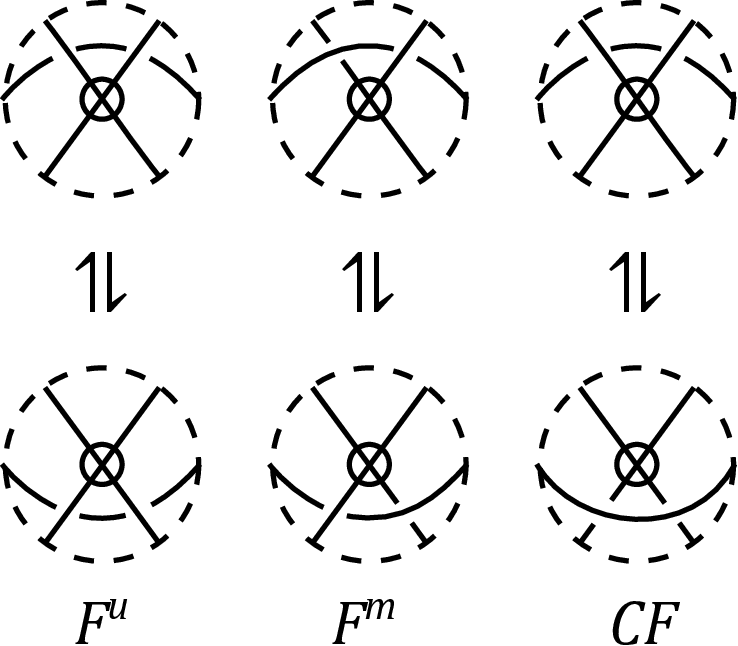}
\caption{Forbidden virtual moves}\label{fig:forbidden_moves1}
\end{figure}

\begin{figure}[h]
\centering\includegraphics[width=0.7\textwidth]{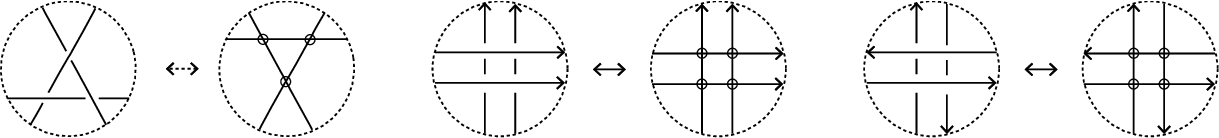}
\caption{Virtualized $\Delta$-move, virtualized $\sharp$-move, virtualized pass move~\cite{NNSW}}\label{fig:virtual_unknotting_moves}
\end{figure}
\end{example}

\begin{example}[Unknotting moves for welded knots]
 Each of $CC$-move, $\Delta$-move, $\sharp$-move and pass move (Fig.~\ref{fig:pass_move}) is unknotting for the welded knots~\cite{Satoh,NNSY}.  
\end{example}

\subsubsection{Binary skein invariants}

\begin{definition}\label{def:binary_invariant}
A nontrivial skein invariant $I$ is a \emph{weakest nontrivial invariant} if any skein invariant $I'\prec I$ is trivial (i.e. a constant map).

A skein invariant $I$ is \emph{binary} if $|\mathrm{im} I|=2$.
\end{definition}

\begin{proposition}\label{prop:binary_invariant}
A skein invariant $I$ is weakest nontrivial if and only if it is binary.
\end{proposition}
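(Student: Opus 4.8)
The plan is to establish the equivalence in two directions, handling the implication \emph{binary $\Rightarrow$ weakest nontrivial} first and then its converse by a contrapositive construction. Throughout I work in the monomial setting announced after Example~\ref{ex:polynomial_invariants}, so that a skein invariant $I$ is, up to the injective relabeling $f$ of Definition~\ref{def:skein_invariant}, just the quotient projection $I_{\mathcal M}=p\colon\mathscr K\to\mathscr K/\mathcal M$ for some finite set of moves $\mathcal M$. In particular $|\mathrm{im} I|=|\mathscr K/\mathcal M|$, and by Proposition~\ref{prop:partial_order}(1) enlarging $\mathcal M$ yields a weaker invariant.

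\emph{The easy direction.} Assume $I$ is binary, $\mathrm{im} I=\{a,b\}$ with $a\ne b$; then $I$ is nontrivial. Let $I'$ be a skein invariant with $I'\prec I$, so $I'=g\circ I$ for some map $g$ and $\mathrm{im} I'=\{g(a),g(b)\}$. If $g(a)\ne g(b)$, then $g$ is injective on $\{a,b\}$; defining $h$ to invert it gives $I=h\circ I'$, hence $I\preceq I'$, so $I'$ is equivalent to $I$, contradicting $I'\prec I$. Therefore $g(a)=g(b)$ and $I'$ is constant. Thus every $I'\prec I$ is trivial, i.e. $I$ is weakest nontrivial.

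\emph{The hard direction} I prove contrapositively: if $I$ is nontrivial but $|\mathrm{im} I|=|\mathscr K/\mathcal M|\ge 3$, I produce a nontrivial skein invariant strictly weaker than $I$. Pick knots $K_x,K_y,K_z$ in three distinct classes of $\mathscr K/\mathcal M$ and set $\mathcal M'=\mathcal M\cup\{(K_x,K_y)\}$, adjoining the degree-$0$ move replacing $K_x$ by $K_y$. By Proposition~\ref{prop:partial_order}(1) we have $I_{\mathcal M'}\preceq I$. The crux is the claim that $\mathscr K/\mathcal M'$ is obtained from $\mathscr K/\mathcal M$ by merging \emph{exactly} the classes $[K_x]_{\mathcal M}$ and $[K_y]_{\mathcal M}$ and creating no other identifications; this is the main obstacle. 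It holds because a degree-$0$ move on a connected (knot) diagram can be applied only when the whole diagram is literally $K_x$: the $0$-tangle has empty boundary, so it must occupy the entire single unicursal component. Hence each use of the new move along a $\sim_{\mathcal M'}$-chain is a global jump $K_x\leftrightarrow K_y$, and the only new relation is $[K_x]_{\mathcal M}\sim[K_y]_{\mathcal M}$. This is precisely the mechanism of Proposition~\ref{prop:order_0} and its corollary, which I would invoke to make the claim rigorous.

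Granting the claim, $I_{\mathcal M'}$ still separates $[K_z]$ from the merged class, so it is nontrivial, while $I_{\mathcal M'}(K_x)=I_{\mathcal M'}(K_y)$ together with $I(K_x)\ne I(K_y)$ shows $I$ cannot factor through $I_{\mathcal M'}$, i.e. $I\not\preceq I_{\mathcal M'}$. Hence $I_{\mathcal M'}\prec I$ is a nontrivial skein invariant strictly weaker than $I$, contradicting weakest nontriviality. Therefore $|\mathrm{im} I|=2$, so $I$ is binary.
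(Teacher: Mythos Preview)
Your proof is correct and follows the same strategy as the paper: for the forward direction you both observe that a strict weakening of a two-valued invariant must collapse to a constant, and for the converse you both adjoin a single degree-$0$ move to $\mathcal M$ to produce a strictly weaker but still nontrivial invariant. Your write-up is in fact more careful than the paper's terse version, which simply asserts $|\mathrm{im}\, I_{\mathcal M'}|=|\mathrm{im}\, I|-1$ without the justification via Proposition~\ref{prop:order_0} that you supply.
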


\begin{proof}
Let $I$ be binary. If $I'\prec I$ then $|\mathrm{im} I'|<|\mathrm{im} I|=2$. Hence, $|\mathrm{im} I'|=1$, and the invariant $I'$ is trivial.

Let a skein invariant $I=I_{\mathcal M}$ be not binary. Then $I$ is not trivial and there exist knots $K$,$K'$ such that $I(K)\ne I(K')$. Denote $\mathcal M'=\mathcal M\cup \{(K,K')\}$. Then $I_{\mathcal M'}\prec I$ and $|\mathrm{im} I_{\mathcal M'}|=|\mathrm{im} I|-1>1$. Thus, $I_{\mathcal M'}$ is not trivial, and $I$ is not a weakest nontrivial invariant.
\end{proof}

\begin{example}[Arf invariant]\label{ex:arf_invariant}
L.H. Kauffman~\cite{KauffmanBook} showed that the pass move (Fig.~\ref{fig:pass_move}) generates the Arf invariant which takes values in $\mathbb Z_2$.
\begin{figure}[h]
\centering\includegraphics[width=0.2\textwidth]{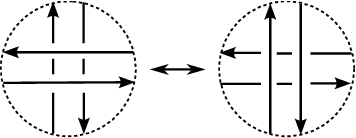}
\caption{Pass move}\label{fig:pass_move}
\end{figure}
\end{example}

\begin{remark}
    Any skein invariant $I\colon\mathscr K\to X$ with a finite value set $X$ is equivalent to the family of binary skein invariants $\mathbf I=\{I_x\}_{x\in X}$ where 
\[
I_x(K)=\left\{\begin{array}{cl}
       1,  & I(K)=x; \\
       0,  & I(K)\ne x.
    \end{array}\right.
\]
\end{remark} 

\subsection{Cardinality of skein invariant}

\begin{definition}\label{def:skein_invariant_cardinality}
The \emph{cardinality} $|\mathcal M|$ of a set of moves $\mathcal M$ is the number of moves in $\mathcal M$.

The \emph{cardinality $|I|$ of a skein invariant} $I\colon \mathscr K\to X$ is the minimal number of moves generating $I$:
\[
|I| = \min\{|\mathcal M| \mid I\sim I_{\mathcal M}\}.
\]
\end{definition}

\begin{remark}
\begin{enumerate}
    \item A skein invariant $I$ has cardinality $0$ if and only if it is a full invariant.
    \item Skein invariants of cardinality $1$ can be various:
    \begin{itemize}
        \item The invariant generated by a $0$-degree move $(K_1,K_2)$, where $K_1, K_2$ are some knots, is almost full: it distinguishes all knots except the pair $K_1, K_2$.
        \item The classical knot invariant generated by the crossing change move $CC$ is trivial.
        \item The invariant generated by the Conway relation 
\[
\skcrro-\skcrlo \rightleftharpoons z\cdot\skcrvo
\]
is equivalent to the Alexander--Conway polynomial.
    \end{itemize}    
\item The cardinality induces an increasing filtration of skein invariant 
\[
\mathscr I'_0\subset\mathscr I'_1\subset\mathscr I'_2\subset\cdots
\]
where $\mathscr I'_n =\{ I\in\mathscr I \mid |I|\le n\}$. This filtration is infinite. Indeed, the $0$-degree invariant $I_{\mathcal M}$ determined by a set of moves $\mathcal M=\{(K_i,K'_i)\}_{i=1}^n$ such that the knots $K_i$, $K'_i$, $i=1,\dots,n$, are all distinct, has cardinality $n$.
\end{enumerate}    
\end{remark}

\section{Other examples of skein invariants}\label{sect:examples}

Let us list some known examples of skein invariants besides those mentioned above.

\subsection{Finite type invariants}

Finite type invariants were introduced by V. Vasiliev~\cite{Vasiliev}. M. Gussarov~\cite{Goussarov} and K. Habiro~\cite{Habiro} showed that finite type invariants are skein.

\begin{theorem}[M. Gussarov, K. Habiro]\label{thm:finite_type_invariant}
For any knots $K,K'\in\mathscr K$ the following conditions are equivalent:
\begin{enumerate}
\item $K\sim_{C_n} K'$;
\item $v(K)=v(K')$ for any finite invariant of order $\le n-1$
\end{enumerate}
In other words, $I_{C_n}$ is the universal finite type invariant of order $\le n-1$.
\end{theorem}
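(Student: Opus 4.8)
The plan is to identify the $C_n$-equivalence relation $\sim_{C_n}$ with the relation ``indistinguishable by finite type invariants of order $\le n-1$'', using Habiro's clasper calculus as the main engine. Recall that a finite type invariant of order $\le n-1$ is a map $v\colon\mathscr K\to A$, for an abelian group $A$, whose linear extension to $\mathbb Z[\mathscr K]$ vanishes on the $n$-th term $\mathcal F_n$ of the singularity (Vassiliev) filtration, where $\mathcal F_n$ is spanned by the alternating sums $\sum_{\varepsilon}(-1)^{|\varepsilon|}K_\varepsilon$ over the $2^n$ resolutions of a knot carrying $n$ marked double points. Thus condition (2) says precisely that $K-K'$ lies in the intersection of the kernels of all order $\le n-1$ invariants. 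The $C_n$-move is surgery along a connected simple tree clasper of degree $n$, and I would take the basic identities of clasper calculus (the moves that cut, slide, and split claspers) as established.

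For the direction (1) $\Rightarrow$ (2), I would prove that a single $C_n$-move changes a knot by an element of $\mathcal F_n$. The key is Habiro's clasper expansion formula: surgery along a degree-$n$ tree clasper $T$ produces a difference $K-K_T$ equal, modulo $\mathcal F_{n+1}$, to the alternating sum obtained by resolving the $n$ nodes of $T$, i.e. a singular knot with $n$ double points. Hence $K-K_T\in\mathcal F_n$, every order $\le n-1$ invariant is unchanged, and iterating over a sequence of $C_n$-moves gives $v(K)=v(K')$. This direction is essentially bookkeeping once clasper calculus is in place.

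The substantial direction is (2) $\Rightarrow$ (1). Here I would first establish Habiro's structural result that $\mathscr K/\!\sim_{C_n}$ is a finitely generated abelian group under connected sum, with inverse given by the mirror--reverse knot; the group law holds because a clasper realizing the inverse cancels the original by clasper calculus. Next I would produce, at each degree $d<n$, enough finite type invariants of order $d$ to detect the associated graded of the $C$-filtration: a degree-$d$ clasper is determined modulo $C_{d+1}$ by its underlying Jacobi (tree) diagram, and the pairing of knots against such diagrams is realized by invariants of order $\le n-1$. The crux is injectivity of the resulting map from $\mathscr K/\!\sim_{C_n}$ into the tuple of values of order $\le n-1$ invariants: if $K-K'$ is killed by all of them, one must exhibit an explicit sequence of $C_n$-moves from $K$ to $K'$. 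I would argue by descending induction on the filtration, using clasper surgeries to cancel the leading obstruction, which lives in a space of degree-$d$ diagrams.

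The main obstacle is exactly this injectivity \emph{over} $\mathbb Z$: finite type invariants a priori only control $K-K'$ modulo the possibly larger intersection of integral kernels, and one must show that clasper calculus realizes the exact integral relations, with no divisibility or torsion slack. Overcoming this is the heart of the Goussarov--Habiro theorem and draws on the full clasper calculus together with the finite generation of $\mathscr K/\!\sim_{C_{n+1}}$; this is why the statement is attributed to~\cite{Goussarov,Habiro} rather than settled by a short direct argument.
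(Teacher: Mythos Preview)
The paper does not supply its own proof of this theorem: it is stated in Section~\ref{sect:examples} purely as a quoted result, attributed to Goussarov~\cite{Goussarov} and Habiro~\cite{Habiro}, with no argument given. So there is no in-paper proof to compare against.

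Your outline follows the standard Habiro route via clasper calculus, which is exactly what the cited references do. The direction (1)$\Rightarrow$(2) is indeed the easy one, and your identification of (2)$\Rightarrow$(1) as the substantive content --- requiring the finitely generated abelian group structure on $\mathscr K/\!\sim_{C_n}$ and the integral realizability of relations via clasper surgery --- is accurate. You also correctly flag that the injectivity over $\mathbb Z$ is not something you are proving but rather the heart of the cited theorems; in that sense your proposal is less a proof than an annotated roadmap pointing back to~\cite{Goussarov,Habiro}, which is in fact all the paper itself does.
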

\begin{figure}[h]
\centering\includegraphics[width=0.8\textwidth]{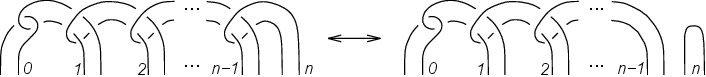}
\caption{$C_n$-move}
\end{figure}
\begin{remark}
The move $C_1$ is equivalent to the crossing change move, the move $C_2$ is equivalent to the $\Delta$-move.
\end{remark}

\subsection{$S$-equivalence}

\begin{definition}\label{def:S-equivalence}
Two square integral matrix are \emph{$S$-equivalent} if they are connected by a sequence of transformations $M\sim PMP^{t}$, $P\in GL(\Z)$, and 
\[
M\sim
\left(\begin{array}{ccc}
M &\mathbf{a} & \mathbf{0}\\
\mathbf{b} & c & 1 \\
\mathbf{0} & 0 & 0
\end{array}\right) \sim
\left(\begin{array}{ccc}
M &\mathbf{a} & \mathbf{0}\\
\mathbf{b} & c & 0 \\
\mathbf{0} & 1 & 0
\end{array}\right).
\]
Classical knots $K$ and $K'$ are \emph{$S$-equivalent} if their Seifert matrices are $S$-equivalent. (For the definition of a Seifert matrix of a knot see~\cite{Rolfsen}.)
\end{definition}

\begin{theorem}[S. Naik, T. Stanford~\cite{NS}]\label{thm:S-equivalence}
Two knots $K$ and $K'$ are $S$-equivalent if and only if they are equivalent by
a sequence of doubled-delta moves.
\end{theorem}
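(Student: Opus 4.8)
The plan is to prove the two implications separately, treating $S$-equivalence as a statement about Seifert forms and translating it into the language of Seifert surfaces embedded in $S^3$. Throughout I would use the fact, built into Definition~\ref{def:S-equivalence}, that $S$-equivalence of integral matrices is generated by congruence $M\mapsto PMP^t$ together with the two stabilization moves.

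For the easy implication — that a doubled-delta move preserves $S$-equivalence — I would realize a single doubled-delta move on a Seifert surface. Starting from a knot $K$ with Seifert surface $F$, I would arrange by an isotopy that the pairs of parallel strands entering the move lie on $F$ as bands, and then check that performing the doubled-delta move can be accompanied by a modification of $F$ into a Seifert surface $F'$ for the resulting knot $K'$ whose Seifert matrix stays within the same $S$-equivalence class. The point is that the \emph{doubling} in the move forces the new linking contributions to appear in conjugate pairs, so that the net algebraic effect is a congruence together with the stabilization blocks rather than an arbitrary change; a direct computation of the finitely many new matrix entries finishes this direction.

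For the hard implication, suppose $K$ and $K'$ have $S$-equivalent Seifert matrices $V$ and $V'$. First I would realize the two stabilization moves geometrically by tubing (adding a trivial handle to) the Seifert surface, and realize congruences by the corresponding change of basis of $H_1$; this reduces the problem to the case where $K$ and $K'$ bound homeomorphic Seifert surfaces $F$ and $F'$ of the same genus inducing \emph{equal} Seifert forms on $H_1(F)\cong H_1(F')$. In this situation the two embeddings of the abstract surface differ, up to a homeomorphism of the surface acting on homology, by an element of the Torelli group of $F$, which is generated by bounding-pair maps. The key geometric lemma is then that each generating bounding-pair map, applied to an embedded Seifert surface, changes its boundary knot by a single doubled-delta move; chaining these realizations produces a sequence of doubled-delta moves from $K$ to $K'$.

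I expect the main obstacle to be this last geometric lemma — identifying the effect of a Torelli generator on the boundary knot with a doubled-delta move — together with the bookkeeping needed in the reduction step to guarantee that, after a controlled number of stabilizations, the two surfaces genuinely carry \emph{equal} Seifert forms rather than merely $S$-equivalent ones. A more hands-on alternative, avoiding the mapping class group, is to express the discrepancy between two embeddings with equal Seifert form directly as a product of clasp-type modifications and to verify that each such clasp is undone by a doubled-delta move; this trades the structural Torelli input for an explicit but lengthier clasper calculation.
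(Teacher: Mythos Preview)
The paper does not actually prove this theorem: it is stated as a result of Naik and Stanford with a citation to~\cite{NS}, followed only by the figure of the doubled-delta move, and the text then moves on to the next subsection. So there is no proof in the paper to compare your proposal against.

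That said, your outline is a reasonable sketch of the Naik--Stanford argument. The easy direction is handled essentially as you describe. For the hard direction, your reduction via stabilization to surfaces carrying equal Seifert forms is correct, and the idea of invoking the Torelli group is in the spirit of the original; however, the generator you would actually need is a \emph{bounding simple closed curve} (Dehn twist along a separating curve), not a bounding-pair map, and translating such a twist into a doubled-delta move on the boundary is precisely the delicate step. Your ``hands-on alternative'' via clasper-type modifications is closer to what Naik and Stanford in fact do. Either way, since the present paper only quotes the result, there is nothing further to compare.
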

\begin{figure}[h]
\centering\includegraphics[width=0.35\textwidth]{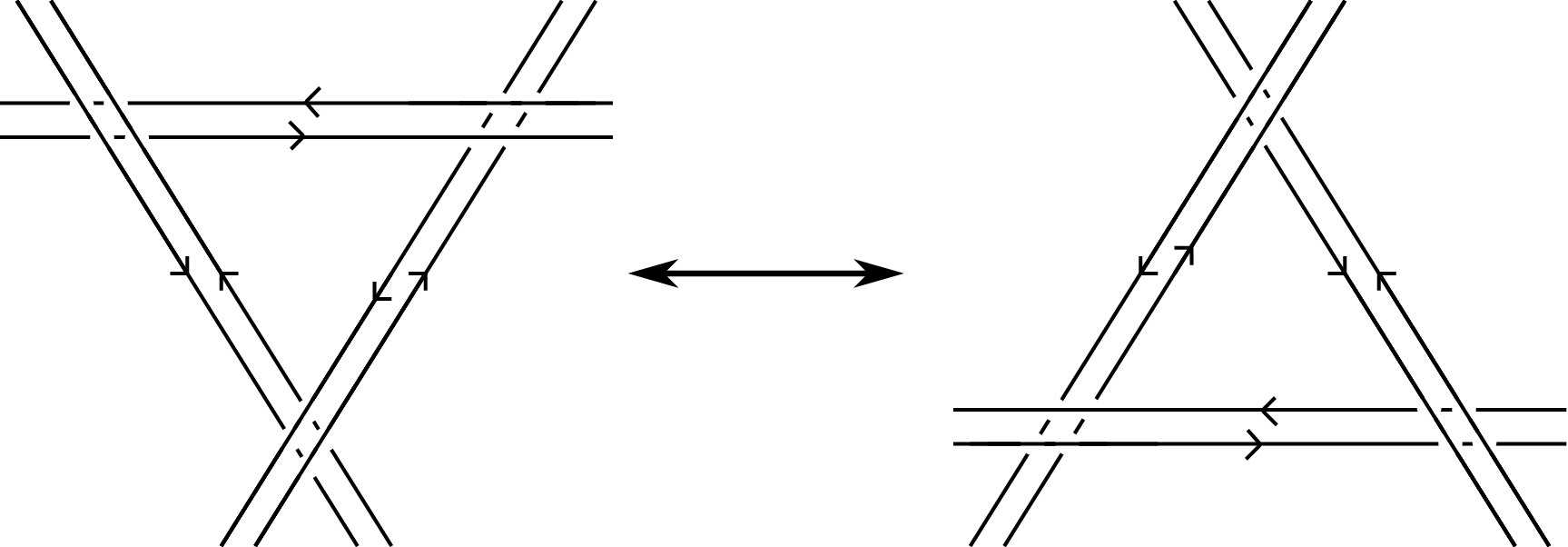}
\end{figure}


\subsection{Linking numbers of welded links}

Audoux et al.~\cite{ABMW} showed that linking numbers of welded links are skein invariants.

\begin{definition}\label{def:linking_number}
Let $L=K_1\cup\cdots\cup K_n$ be a welded link diagram with enumerated components. The \emph{linking number} of the components $K_i$ and $K_j$ is $lk_{ij}(L)=\sum_{c: K_i\mbox{\scriptsize over }K_j}sgn(c)$. The difference $W(K_i,K_j)=l_{ij}-l_{ji}$ is called the \emph{wriggle number} of the pair of components $K_i,K_j$.
\end{definition}

\begin{theorem}[\cite{ABMW}]\label{thm:ABMW}
1. $I_F\simeq (lk_{ij})_{i,j=1}^n$;\\
2. $I_{VC}\simeq (lk_{ij}+lk_{ji})_{i<j}$;\\
3. $I_{CC}\simeq (lk_{ij}-lk_{ji})_{i<j}$;\\
4. $I_{wBP}\simeq (lk^{\Z_2}_{ij}+lk^{\Z_2}_{ji})\oplus(\sum_{j\ne i}lk_{ij}^{\Z_2})_i$.
\end{theorem}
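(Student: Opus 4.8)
The plan is to translate everything into Gauss (arrow) diagrams. Encode a welded link by one oriented circle per component together with one signed arrow per classical crossing, the arrow running from the over-strand to the under-strand; then $lk_{ij}$ is exactly the signed count of arrows with tail on $K_i$ and head on $K_j$, and welded equivalence (virtual and classical Reidemeister moves together with $F^o$) becomes the standard arrow-diagram calculus, in which $F^o$ lets arrow \emph{tails} slide freely along a component. Each of the four moves $F$, $VC$, $CC$, $wBP$ then becomes an elementary operation on arrows, and for each I would split the claimed equivalence into an \emph{invariance} half (the stated combination of linking numbers is unchanged) and a \emph{completeness} half (the combination determines the equivalence class and every value is realized).

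For invariance I would simply track the effect of each move on the signed arrow count. The forbidden move only slides a strand across a virtual crossing, so it leaves every arrow, hence every $lk_{ij}$, untouched; this gives item~1 in one direction. A crossing change $CC$ reverses one mixed arrow and negates its sign, changing $lk_{ij}$ and $lk_{ji}$ by the same amount, so it preserves precisely the wriggle numbers $lk_{ij}-lk_{ji}$ (item~3). The virtual crossing change $VC$ reverses a mixed arrow while \emph{keeping} its sign, changing $lk_{ij}$ and $lk_{ji}$ oppositely and so preserving the symmetric sums $lk_{ij}+lk_{ji}$ (item~2). The band-pass $wBP$ alters linking numbers only in even bunches, so it preserves the $\Z_2$-combinations listed in item~4. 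These computations identify the target invariant in each case.

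The substance is completeness. For $F$ the extra move supplies mobility of the remaining class of endpoints, so that together with $F^o$ \emph{all} arrow endpoints slide freely; oppositely matched arrows between an ordered pair then cancel by the Reidemeister~II relation and self-arrows are removed by Reidemeister~I, leaving exactly $lk_{ij}$ arrows from $K_i$ to $K_j$ and $lk_{ji}$ arrows from $K_j$ to $K_i$. Thus the reduced diagram is determined by the vector $(lk_{ij})_{i\ne j}$, which proves item~1. For $VC$ and $CC$ I would first note that arrow reversal combined with $F^o$ again restores full endpoint mobility, so the same cancellation reduces every diagram to signed counts; the extra move then lets one slide the sum (resp. the difference) up and down by $2$, and since $lk_{ij}+lk_{ji}$ and $lk_{ij}-lk_{ji}$ have equal parity, two diagrams agreeing in the preserved combination can be driven to the same signed counts. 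Realizability of every value of each combination is witnessed by explicit model links assembled from welded Hopf links.

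The main obstacle is the completeness step for $wBP$, where arrow reversal is not available and one must instead compute the full sublattice of $\Z^{n(n-1)}$ spanned by all $wBP$-realizable changes of the linking vector and check that it equals the kernel of the proposed map onto $(lk^{\Z_2}_{ij}+lk^{\Z_2}_{ji})\oplus(\sum_{j\ne i}lk_{ij}^{\Z_2})_i$, neither larger nor smaller. I expect this lattice bookkeeping, showing that no further integral invariant survives the band-pass beyond the stated mod-$2$ data, to be the delicate part; the analogous but easier lattice computation pins down the sum combination for $VC$. These verifications are carried out in detail in~\cite{ABMW}, and the arrow-diagram reductions above are the route I would follow to reconstruct them.
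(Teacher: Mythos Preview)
The paper does not prove this theorem at all: it is quoted as a result of Audoux, Bellingeri, Meilhan and Wagner, with the citation~\cite{ABMW} and no accompanying argument. There is therefore nothing in the paper to compare your proposal against.

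Your sketch is in the spirit of the Gauss-diagram approach that the cited reference uses, and the overall plan (invariance by tracking arrow counts, completeness by reducing to a normal form, a lattice computation for the $wBP$ case) is the right shape. A couple of your descriptions are a bit loose --- for instance, saying the forbidden move ``leaves every arrow untouched'' is not literally true on Gauss diagrams (it permutes adjacent endpoints), though it is true that the component-to-component signed arrow counts $lk_{ij}$ are unchanged, which is what you need. You also correctly flag the $wBP$ completeness step as the place where real work is required. Since the paper itself defers entirely to~\cite{ABMW}, your proposal already goes further than the paper does; for the details you would indeed have to reconstruct or consult the arguments there.
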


\begin{figure}[h]
\centering\includegraphics[width=0.6\textwidth]{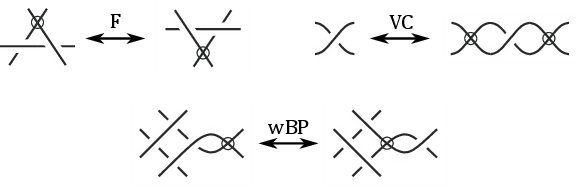}
\caption{Moves on welded knots}
\end{figure}

\subsection{Index polynomial}

Some parity invariants of virtual knots appear to be skein.

\begin{definition}\label{def:gaussian_index}
Let $D$ be a virtual knot diagram and $c$ a crossing of $D$. The oriented smoothing of $D$ at the crossing $c$ splits the diagram into the left and right halves $D_c^l$ and $D_c^r$ (Fig.~\ref{fig:knot_halves}). The \emph{(Gaussian) index} of the crossing $c$ is $ind(c)=sgn(c)W(D_c^r,D_c^l)\in\Z$.

\begin{figure}
\centering\includegraphics[width=0.5\textwidth]{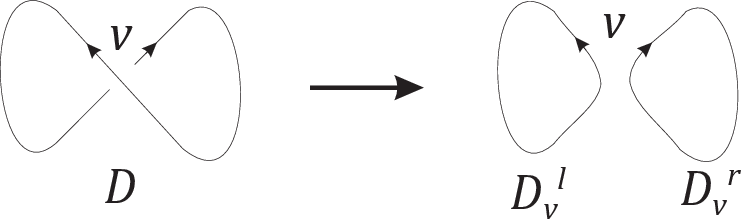}
\caption{Knot halves at a crossing}\label{fig:knot_halves}
\end{figure}

The \emph{odd writhe}~\cite{Kauffman2004} of the virtual knot is the sum $J(D)=\sum_{c: ind(c)\mbox{\scriptsize\ is odd}}sgn(c)$.

The \emph{index polynomial}~\cite{FK} of the virtual knot is 
\[
W_D(t)=\sum_c sgn(c)\cdot(t^{ind(c)}-1).
\]
\end{definition}

Recall that virtual knots can be presented by Gauss diagrams~\cite{GPV}. The Gauss diagram of a virtual knot diagram is an oriented chord diagram whose chords correspond to the crossings of the virtual diagram, and are equipped with a sign (the sign of the crossing) and an orientation (from the overcrossing to the undercrossing), see Fig.~\ref{fig:virtual_gauss_diagram}.

\begin{figure}[h]
\centering\includegraphics[width=0.15\textwidth]{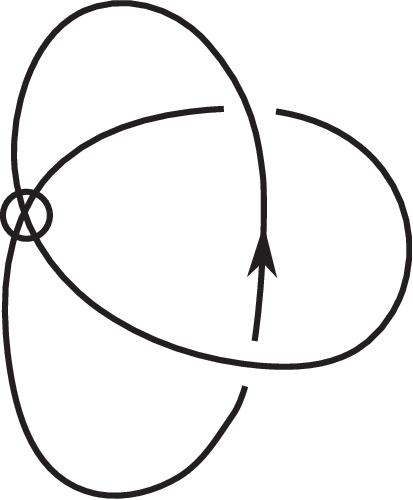}\qquad
\includegraphics[width=0.15\textwidth]{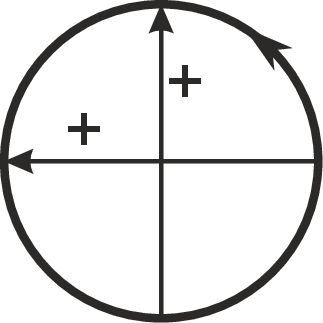}
\caption{A virtual knot and its Gauss diagram}\label{fig:virtual_gauss_diagram}
\end{figure}

Then one can define local moves as local transformations of Gauss diagrams (Fig.~\ref{fig:shell_xi_moves}).

\begin{theorem}[\cite{NNS,ST}]\label{thm:index_polynomial}
Let $K$ and $K'$ be oriented virtual knots. Then
\begin{enumerate}
\item The condition $K\sim_{\Xi} K'$ is equivalent to $J(K)=J(K')$;
\item The condition $K\sim_{\{S_1, S_2\}} K'$ is equivalent to $W_K(t)=W_{K'}(t)$.
\end{enumerate}
\end{theorem}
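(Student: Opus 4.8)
The plan is to treat each equivalence by the two-step scheme that underlies every classification of a skein invariant: an \emph{invariance} step showing that the generating moves do not change the invariant (the ``only if'' directions), and a \emph{normal form} step showing that any Gauss diagram can be reduced by the moves to a canonical diagram depending only on the value of the invariant (the ``if'' directions). Throughout I would work entirely on Gauss diagrams, where the moves $\Xi$ and $S_1,S_2$ act as local replacements of chords, and where $ind(c)$ and $sgn(c)$ are read off combinatorially.

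For the invariance step I would compute directly how the data $(sgn(c),ind(c))$ attached to the chords change under each move. The essential point is to track two effects: how the indices of the chords taking part in the move are altered, and how the indices of all \emph{other} chords shift because the cyclic order of endpoints on the circle changes. For the move $\Xi$ I expect the affected chords to contribute only through crossings of even index or in cancelling $\pm$ pairs, so that the odd-index signed count $J$ is preserved; for $S_1$ and $S_2$ I expect the contributions to $\sum_{c:ind(c)=n}sgn(c)$ to cancel for every $n$, so that the whole index polynomial $W_K(t)$ is preserved. A useful consistency check is the identity $W_K(-1)=-2J(K)$, which holds because $t^{ind(c)}-1$ vanishes at $t=-1$ for even index and equals $-2$ for odd index; this shows a priori that the $\{S_1,S_2\}$-classification refines the $\Xi$-classification and lets me read off part of the first statement from the second.

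For completeness I would produce a normal form. The coefficients of $W_K(t)$ are exactly the signed counts $a_n=\sum_{c:ind(c)=n}sgn(c)$ for $n\ne 0$, subject to the single relation $W_K(1)=0$ holding identically. I would show that the moves allow one to (i) delete any chord of index $0$, (ii) commute chords past one another, and (iii) annihilate a pair of chords of equal index and opposite sign, leaving a canonical diagram consisting of $|a_n|$ isolated chords of index $n$ and sign $sgn(a_n)$ for each $n\ne 0$. Two knots with the same polynomial then reduce to identical canonical diagrams, yielding the second equivalence. The first equivalence follows the same pattern with the index recorded only modulo $2$: the $\Xi$-move should additionally permit shifting a chord's index by $2$ and cancelling even-index chords, so that the normal form retains only the parity data measured by $J(K)$.

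The main obstacle will be the normal-form reduction, and specifically verifying that the prescribed moves are strong enough to realize the three operations above while respecting the global index constraints. Deleting index-$0$ chords and cancelling opposite-sign pairs must be shown to be expressible through $\Xi$ (respectively $S_1,S_2$) together with Reidemeister moves, and one must confirm that no hidden invariant beyond $J$ (respectively $W_K$) survives the reduction. This combinatorial bookkeeping on the chord indices, rather than the invariance computation, is where the real work lies.
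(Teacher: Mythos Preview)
The paper does not supply its own proof of this theorem; it is quoted with attribution to~\cite{NNS,ST}. Your two-step scheme---invariance of the invariant under the generating moves, followed by reduction to a normal form on Gauss diagrams---is precisely the strategy employed in those references, so at the level of architecture there is nothing to contrast.

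There is, however, a concrete gap in your normal-form step. You propose reducing to ``a canonical diagram consisting of $|a_n|$ isolated chords of index $n$ and sign $\sgn(a_n)$ for each $n\ne 0$'', but an isolated chord---one linking no other chord---necessarily has index $0$: after smoothing at such a chord the two halves $D_c^l$, $D_c^r$ share no crossings, so their wriggle number vanishes. A Gauss diagram of pairwise non-linking chords therefore has $W_K(t)\equiv 0$, and your proposed target is unreachable whenever $W_K\ne 0$. Any genuine normal form must retain enough auxiliary chords to realise the prescribed indices, and the substantive combinatorics in~\cite{NNS,ST} lies exactly in designing such standard building blocks and proving that the shell moves (respectively the $\Xi$-move) connect an arbitrary diagram to them. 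The same objection applies to your $\Xi$-case sketch. Your diagnosis that ``the real work lies'' in the reduction is correct; the specific endpoint you aim for is not realisable.
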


\begin{figure}
\includegraphics[width=0.6\textwidth]{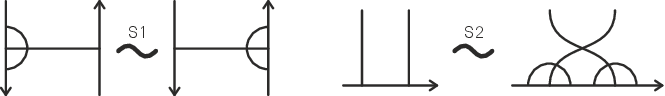}\qquad
\includegraphics[width=0.25\textwidth]{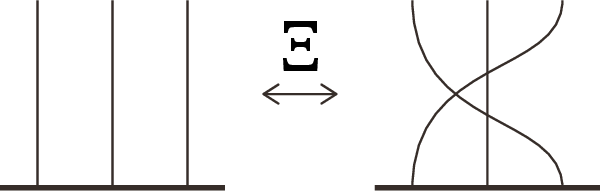}
\caption{Shell moves and $\Xi$-move}\label{fig:shell_xi_moves}
\end{figure}


\section{Open questions}

In the paper we have took first steps towards a theory of skein invariants, and there are a lot of unanswered questions. Let us list some of them.

\begin{itemize}
\item Are known knot invariants such as unknotting number $u(K)$, the crossing number $c(K)$, the genus $g(K)$ skein? Skeinness of the unknotting and the crossing number would imply their additivity.
\item Does the sequence of skein invariants $\mathscr I_1\subset \mathscr I_{2}\subset \mathscr I_{3}\subset\cdots$ stabilize?
\item Find conditions for a set of moves to be unknotting.
\item Describe binary skein invariants.
\item Describe skein invariants of cardinality $1$.
\item Besides order or cardinality, one can filter skein invariants by complexity (number of crossings) of moves. The task is to describe skein invariants of small complexity.
\end{itemize}

\end{document}